\newtheorem{thm}{Theorem}[section]
\newtheorem{cor}[thm]{Corollary}
\newtheorem{conj}[thm]{Conjecture}
\newtheorem{prop}[thm]{Proposition}
\theoremstyle{definition}
\newtheorem{exam}[thm]{Example}
\newtheorem{defi}[thm]{Definition}
\title{An Alon-Tarsi Style Theorem for Additive Colorings}
\begin{document}

\setstcolor{red}
\maketitle
\begin{center} {Ian Gossett\

Department of Mathematics and Computer Science, Wesleyan University\

Middletown CT, 06459\ 

igossett@wesleyan.edu}
\end{center}
\begin{abstract}

        We first give a proof of the Alon-Tarsi list coloring theorem that differs from Alon and Tarsi's original. We use the ideas from this proof to obtain the following result, which is an additive coloring analog of the Alon-Tarsi Theorem: Let $G$ be a graph and let $D$ be an orientation of $G$. We introduce a new digraph $\mathcal{W}(D)$, such that if the out-degree in $D$ of each vertex $v$ is $d_v$, and if the number of Eulerian subdigraphs of $\mathcal{W}(D)$ with an even number of edges differs from the number of Eulerian subdigraphs of $\mathcal{W}(D)$ with an odd number of edges, then for any assignment of lists $L(v)$ of $d_v+1$ positive integers to the vertices of $G$, there is an additive coloring of $G$ assigning to each vertex $v$ an element from $L(v)$. As an application, we prove an additive list coloring result for tripartite graphs $G$ such that one of the color classes of $G$ contains only vertices whose neighborhoods are complete.
\end{abstract}

\section{Introduction}
\vspace{.2cm}
An \textit{additive coloring} of a graph $G$ is a function $\ell:V(G)\rightarrow \mathbb{N}$, such that $c:V(G)\rightarrow \mathbb{N}$ defined by $c(v)=\sum_{u\in N(v)}\ell(u)$ is a proper coloring of $G$. Additive colorings, originally called \textit{lucky labelings}, were introduced by Czerwiński et al. in \cite{ref8}. Most of the work pertaining to additive colorings has been devoted to computing the \textit{additive chromatic number}, $\eta (G)$, of specific graphs, where $\eta(G)$ is defined to be the least integer $k$ such that $G$ can be additively colored with elements from the set $\{1,...,k\}$. In this work, we focus on additive colorings where each vertex is assigned its own list of permissible labels.
 
  The\textit{ additive list chromatic number} of $G$, denoted $\eta_l(G)$, is the least integer $k$ such that for any lists of positive integers of length $k$ that are assigned to the vertices of $G$, there is an additive coloring that assigns to each vertex an element from its list. For arbitrary graphs $G$, it was shown in \cite{ref2} that $\eta_\ell (G) \leq \Delta^2-\Delta+1$, where $\Delta\geq 2$ is the maximum degree of $G$. It was shown later in \cite{ref5} that  $\eta_{\ell}(G)\leq k \Delta +1 $, where $k$ is the degeneracy of $G$. For planar graphs, the bound given in \cite{ref5} leads to the upper bound of $\eta_{\ell}(G)\leq 5\Delta +1$, and another linear bound $\eta_\ell (G)\leq 2\Delta +25$ was proved in \cite{ref9}. Some constant bounds for planar graphs of given girth can also be found in \cite{ref7}.  Our main result, Theorem \ref{thm:newat}, may have the potential to improve some of these bounds.

We first present a proof of the Alon-Tarsi list coloring theorem that differs from the original proof given in \cite{ref4}. We use the ideas from this proof to obtain an additive coloring analog of the Alon-Tarsi Theorem: Given an orientation $D$ of $G$, we introduce a new digraph, denoted $\mathcal{W}(D)$, such that if the number of even spanning Eulerian subdigraphs of $\mathcal{W}(D)$ differs from the number of odd spanning Eulerian subdigraphs, then $\eta_l(G)$ is at most one greater than the maximum out-degree of $D$. 

 This paper proceeds as follows: In section 2, we establish basic definitions and background and include our proof of the Alon-Tarsi Theorem. In section 3, we introduce and give examples of the $\mathcal{W}(D)$ construction. In section 4, we prove our main result, Theorem \ref{thm:newat}. In section 5, we give an application of Theorem \ref{thm:newat} to a special class of tripartite graphs. In section 6, we give some concluding remarks and conjecture that for every graph $G$, there is an orientation $D$ of $G$ such that the number of even Eulerian subdigraphs of $\mathcal{W}(D)$ differs from the number of odd Eulerian subdigraphs. 
  \vspace{.2cm}

\section{Definitions and Background}
We establish some notational conventions and definitions. Let $D$ be a digraph. For each $v\in V(D)$, let $d^+_D(v)$ denote the out-degree of $v$ in $D$, and let $d^-_D(v)$ denote the in-degree of $v$ in $D$. For each $v\in V(D)$, let $N_D(v)$ denote the set of vertices adjacent to $v$ in $D$ and let $N_D[v]=N_D(v)\cup \{v\}$. (Define $N_G(v)$ and $N_G[v]$ similarly for undirected graphs $G$.) All graphs and digraphs are assumed to be finite and simple. For sets $U$ and $V$, define $U\triangle V=(U\setminus V)\cup (V\setminus U)$. Let $\mathbb{N}$ denote the set of positive integers. \vspace{.2cm}

\begin{defi}
An \textbf{additive coloring} of a graph $G$ is a function $\ell:V(G)\rightarrow \mathbb{N}$, such that the map $c:V(G)\rightarrow \mathbb{N}$ defined by $c(v)=\sum_{u\in N_G(v)}\ell(u)$ is a proper coloring of $G$; that is, $c(u)\neq c(v)$ for whenever $\{u,v\}\in E(G)$.
\end{defi}

\begin{defi}
Let $D$ be a digraph. $D$ is said to be \textbf{Eulerian} if for each $v\in V(D)$, $d^+_D(v)=d^-_D(v)$. (Note that we do not require Eulerian digraphs to be connected.)
\end{defi}

\begin{defi}
Let $EE(D)$ denote the number of spanning Eulerian subdigraphs of $D$ with an even number of edges, and $EO(D)$ denote the number of spanning Eulerian subdigraphs of $D$ with an odd number of edges. We call an Eulerian subdigraph $\textbf{even}$ if it has an even number of edges, and $\textbf{odd}$ if it has an odd number of edges. 
\end{defi}

 The following theorem is known as the \textit{Combinatorial Nulstellensatz}, and is a main tool used in this paper.

\begin{thm}(Alon, \cite{ref3})
Let $f$ be a polynomial over a field $F$ in the variables $\mathbf{x}=(x_1,x_2,...,x_n)$. Let $d_1,...,d_n$ be nonnegative integers such that the total degree of $f$ is $\sum_{i=1}^{n}d_i$. For each $1\leq i \leq n$, let $L_i$ be a set of $d
_i+1$ elements of $F$. If the coefficient in $f$ of $\prod_{i=1}^{n}x^{d_i}_i$ is nonzero, then there exists $\mathbf{t}\in L_1\times L_2\times \cdots \times L_n$ such that $f(\mathbf{t})\neq 0$.
\end{thm}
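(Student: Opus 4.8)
The plan is to argue by contradiction: assume $f$ vanishes at every point of $L_1\times\cdots\times L_n$, and derive a contradiction with the hypothesis that the coefficient of $\prod_{i=1}^n x_i^{d_i}$ is nonzero. The engine of the proof is a degree-reduction step that replaces $f$ by a polynomial agreeing with it on the grid $L_1\times\cdots\times L_n$ but with controlled degree in each variable, combined with a multivariate generalization of the fact that a nonzero univariate polynomial of degree $d$ over a field has at most $d$ roots.

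First I would introduce, for each $i$, the monic polynomial $g_i(x_i)=\prod_{s\in L_i}(x_i-s)$ of degree $d_i+1$. Writing $g_i(x_i)=x_i^{d_i+1}+\sum_{j=0}^{d_i}c_{ij}x_i^{j}$ gives the identity $t^{d_i+1}=-\sum_{j=0}^{d_i}c_{ij}t^{j}$ for every $t\in L_i$. Using these identities I would repeatedly rewrite any monomial of $f$ in which some $x_i$ occurs to a power $\geq d_i+1$, replacing $x_i^{d_i+1}$ by a polynomial of degree $\leq d_i$ in $x_i$; each such rewrite strictly lowers the total degree of the monomial being rewritten and hence does not increase the total degree of the whole polynomial. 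After finitely many steps this terminates in a polynomial $\bar f$ with $\deg_{x_i}\bar f\leq d_i$ for every $i$, and with $\bar f(\mathbf t)=f(\mathbf t)$ for all $\mathbf t\in L_1\times\cdots\times L_n$, since every substitution used is valid at such points.

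The step I expect to require the most care is verifying that the coefficient of $\prod_{i=1}^n x_i^{d_i}$ is the same in $\bar f$ as in $f$. Because $\deg f=\sum_i d_i$, this monomial sits at the maximal possible total degree; any monomial that gets rewritten either already has total degree $<\sum_i d_i$ (so all its descendants stay below that degree) or has total degree exactly $\sum_i d_i$ but some exponent exceeding its bound, in which case rewriting strictly drops it below $\sum_i d_i$ and it never reappears at the top degree. The unique monomial of total degree $\sum_i d_i$ all of whose exponents already satisfy the bounds is $\prod_i x_i^{d_i}$ itself, and it is left untouched. Hence its coefficient in $\bar f$ equals its coefficient in $f$, which is nonzero by hypothesis; in particular $\bar f$ is not the zero polynomial.

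Finally I would prove and apply the following lemma by induction on $n$: if $h$ satisfies $\deg_{x_i}h\leq d_i=|L_i|-1$ for all $i$ and $h$ vanishes on all of $L_1\times\cdots\times L_n$, then $h\equiv 0$. The base case $n=1$ is the root bound for univariate polynomials; for the inductive step one expands $h$ in powers of $x_n$, fixes the first $n-1$ coordinates at an arbitrary point of $L_1\times\cdots\times L_{n-1}$, uses the $n=1$ case to see that each coefficient polynomial vanishes on $L_1\times\cdots\times L_{n-1}$, and invokes the inductive hypothesis. Applying this lemma to $\bar f$ — which vanishes on the grid under our contradiction hypothesis and obeys the degree bounds by construction — forces $\bar f\equiv 0$, contradicting that its top coefficient is nonzero. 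Therefore there exists $\mathbf t\in L_1\times\cdots\times L_n$ with $f(\mathbf t)\neq 0$.
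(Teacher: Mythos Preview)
The paper does not give its own proof of this theorem: it is stated as the \emph{Combinatorial Nullstellensatz} and attributed to Alon [3], then used as a black box in the proofs of \Cref{thm:at} and \Cref{thm:newat}. So there is no in-paper argument to compare against.

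That said, your proposal is correct and is essentially Alon's original proof: reduce $f$ modulo the vanishing polynomials $g_i(x_i)=\prod_{s\in L_i}(x_i-s)$ to obtain $\bar f$ with $\deg_{x_i}\bar f\le d_i$, check that the top monomial $\prod_i x_i^{d_i}$ is unaffected (your degree-counting argument for this is fine, since any monomial requiring reduction either already has total degree below $\sum_i d_i$ or is forced strictly below after one rewrite), and then invoke the multivariate root-counting lemma to conclude $\bar f\equiv 0$, a contradiction. Each step is sound.
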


\Cref{thm:at} is known as the \textit{Alon-Tarsi Theorem}, and has been used in a variety of list coloring applications (see \cite{ref11} for a nice overview). Our proof of \Cref{thm:at} differs from the proof given in \cite{ref4} in two ways: First, we do not define an adjacency polynomial $f_G$ based on a fixed ordering of the vertices of $G$. Instead, we define the polynomial $f$ so that it depends on a given orientation $D$ of $G$ (and $f$ is equal to $\pm f_G$). Second, we give a direct bijective correspondence between the monomials in the expansion of $f$ and the subdigraphs of $D$, rather than a correspondence that first passes through the set of orientations of $G$. One benefit of this proof is that it can be successfully modified to prove our main result, \Cref{thm:newat}.

\begin{thm} \label{thm:at}(Alon-Tarsi, \cite{ref4}) Let $G=(V,E)$ be a graph and let $D$ be an orientation of $G$ such that  $EE(D)\neq EO(D)$. Let $L(v)$ be a set of $d^+_D(v)+1$ distinct integers for each $v\in V$. Then there is a proper vertex coloring $c:V\rightarrow \mathbb{Z}$ such that $c(v)\in L(v)$ for each $v\in V$. 
\end{thm}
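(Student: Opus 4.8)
The plan is to apply the Combinatorial Nullstellensatz to the polynomial
$$f(\mathbf{x}) = \prod_{(u,v)\in A(D)}(x_u - x_v),$$
where the product runs over the arcs of $D$, each written with its tail first. Since $D$ is an orientation of $G$, each edge $\{u,v\}\in E$ contributes exactly one factor, namely $\pm(x_u-x_v)$; hence for $\mathbf{t}\in\mathbb{Z}^V$ we have $f(\mathbf{t})\neq 0$ if and only if $t_u\neq t_v$ for every edge of $G$, i.e.\ exactly when $c(v)=t_v$ is a proper coloring. Note that the total degree of $f$ is $|A(D)|=\sum_{v\in V}d^+_D(v)$.

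The core of the argument is to show that the coefficient of $\prod_{v\in V}x_v^{d^+_D(v)}$ in $f$ is nonzero. I would do this by expanding the product: selecting a term amounts to choosing, from each factor $(x_u-x_v)$, either $x_u$ or $-x_v$, which is the same as choosing a subset $S\subseteq A(D)$ of arcs to be ``flipped'' — take $-x_v$ (the head) for each $a=(u,v)\in S$, and take $x_u$ (the tail) for each $a\notin S$. The term associated with $S$ is $(-1)^{|S|}\prod_{v\in V}x_v^{e_S(v)}$, where $e_S(v)=d^+_D(v)-d^+_S(v)+d^-_S(v)$ and $d^\pm_S$ are the in/out-degrees in the spanning subdigraph $(V,S)$. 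This gives the promised direct bijection between the terms of the expansion of $f$ and the spanning subdigraphs of $D$.

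Now $e_S(v)=d^+_D(v)$ for every $v$ holds precisely when $d^+_S(v)=d^-_S(v)$ for every $v$, that is, precisely when $S$ is a spanning Eulerian subdigraph of $D$. Summing the signs of the contributing terms, the coefficient of $\prod_{v}x_v^{d^+_D(v)}$ in $f$ equals
$$\sum_{S\ \mathrm{Eulerian}}(-1)^{|S|} = EE(D)-EO(D),$$
which is nonzero by hypothesis. Applying the Combinatorial Nullstellensatz with $d_v=d^+_D(v)$ and $L_v=L(v)$ (which has $d_v+1$ elements) yields $\mathbf{t}\in\prod_{v\in V}L(v)$ with $f(\mathbf{t})\neq 0$, and by the first paragraph $c(v)=t_v$ is the required proper coloring.

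I expect the only real work to be the bookkeeping in the second step: verifying carefully that flipping the arcs of $S$ changes the exponent of each $x_v$ exactly by $d^-_S(v)-d^+_S(v)$, and recognizing the Eulerian condition as the exact constraint that isolates the target monomial. The remaining points are routine, though I would take care with degenerate cases (the empty subdigraph is Eulerian and contributes $+1$, consistent with the paper's convention that Eulerian subdigraphs need not be connected).
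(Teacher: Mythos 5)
Your proposal is correct and follows essentially the same route as the paper: the same polynomial $f=\prod_{vw\in \vec{E}(D)}(x_v-x_w)$, the same direct bijection between terms of the expansion and spanning subdigraphs of $D$ (an arc lies in $S$ exactly when its head variable is chosen, giving sign $(-1)^{|S|}$), and the same identification of the coefficient of $\prod_{v}x_v^{d^+_D(v)}$ with $EE(D)-EO(D)$ before invoking the Combinatorial Nullstellensatz. The only cosmetic difference is that the paper performs the exponent bookkeeping via auxiliary variables $y_v$, whereas you compute the exponents $e_S(v)$ directly.
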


\begin{proof} Identify the vertices of $G$ with the set $\{1,2,..,.n\}$. Note that if there is some $x\in \mathbb{Z}^{|V(G)|}$  so that the polynomial $f\in \mathbb{R}[x_1,x_2,...,x_n]$ defined by $$f(x_1,..,x_n)=\prod_{vw\in \vec{E}(D)}\left(x_v - x_w\right)$$ is nonzero at $x$, then $x$ corresponds to a proper vertex coloring of $G$. We will want to distinguish between the occurrences of a variable $x_v$ inside of a $vw$ factor of $f$ and the occurences $x_v$ inside of other factors. To do this, we temporarily replace the occurences of $x_v$ within a $vw$ factor with a new variable $y_v$; for each $v\in V(D)$, define two variables, $x_v$ and $y_v$, and define the polynomial $\hat{f}\in \mathbb{R}[x_1,...,x_n,y_1,...,y_n]$ by  $$\hat{f}(x_1,...,x_n,y_1,...,y_n)=\prod_{vw\in \vec{E}}\left(y_v-x_w \right).$$

Consider the monomials in the expansion of $\hat{f}$. (Here we are considering the monomials before ``collecting like terms," so there will be $2^{|E(G)|}$ such monomials.) Each monomial corresponds to picking one (signed) variable from each factor of $\hat{f}$. Therefore, if $M$ is such a monomial, we can write $M=\prod_{vw\in \vec{E}(D)}z_{vw}$, where for each $vw\in \vec{E}(D)$,  either $z_{vw}=y_v$, or $z_{vw}=-x_w$. \vspace{.2cm}

Let $M=\prod_{vw\in \vec{E}(D)}z_{vw}$ be a monomial in the expansion of $\hat{f}$, as above. Associate to $M$ the subdigraph $S_M$ of $D$ defined by: $$V(S_M)=V(D)$$
$$\vec{E}(S_M)=\{vw\in \vec{E}(D):z_{vw}=-x_w\}.$$

The following observation is immediate, since an edge $vw$ is included in $S_M$ if and only if $-x_w$ is picked from the $vw$ factor of $\hat{f}$ to contribute to $M$. \vspace{.2cm}

\noindent Observation 1: The correspondence $M\leftrightarrow S_M$ is a bijective correspondence between the spanning subdigraphs of $D$ and the monomials in the expansion of $\hat{f}$.
\vspace{.2cm}

Another immediate observation is that the sign of $M$ is negative in the expansion of $\hat{f}$ if and only if $\#\{vw\in \vec{E}(D):z_{vw}=-x_w\}$ is odd. Therefore, we have: \vspace{.2cm}

\noindent Observation 2: The sign of a monomial $M$ in the expansion of $\hat{f}$ is negative if and only if  $S_M$ has an odd number of edges. \vspace{.2cm}

Fix  $u\in V(D)$.  Let $M=\prod_{vw\in \vec{E}(D)}z_{vw}$ be a monomial in the expansion of $\hat{f}$, as above. For each $v\in V(D)$, denote the degree of $x_v$ in $M$ by $\text{deg}_M(x_v)$. Define the following notation: 
\begin{center}
\begin{align*}
k_u&=\#\{uw\in \vec{E}(D):z_{uw}=-x_w\}\\
t_u&=\#\{vu\in\vec{E}(D):z_{vu}= -x_u\}\\
\end{align*}
\end{center}
 By the construction of $S_M$, there are exactly $k_u$ edges in $S_M$ whose initial vertex is $u$, and exactly $t_u$ edges whose terminal vertex is $u$. Hence, $d^+_{S_M}(u)=k_u,$ and $d^-_{S_M}(u)=t_u$, and we have that $d^+_{S_M}(u)=d^-_{S_M}(u)$ if and only if $t_u=k_u$. Furthermore, note that $t_u=\text{deg}_M(x_u)$,   and since $y_u$ occurs in exactly $d^+_D(u)$ factors of $\hat{f}$,  $\text{deg}_M(y_u)=d^+_D(u)-k_u$.   Therefore, $$d^+_{S_M}(u)=d^-_{S_M}(u) \text{ if and only if }\text{deg}_M(y_u)=d^+_D(u)-t_u=d^+_D(u)-\text{deg}_M(x_u).$$ This yields the following observation: \vspace{.2cm}

 \noindent Observation 3: $S$ is a spanning Eulerian subdigraph of $D$ if and only if $S=S_M$ for some $M$ with $\text{deg}_M(y_v)+\text{deg}_M(x_v)=d^+_D(v)$ for every $v\in V(D)$.  \vspace{.2cm}

If we now let $y_v=x_v$ in $\hat{f}$ for each $v$, we get back the polynomial $f$, and the monomials that are in correspondence with the spanning Eulerian subdigraphs of $D$ are now exactly the monomials $M$ in the expansion of $f$ such that for each $v\in V(D)$, $\text{deg}_M(x_v)=d^+_D(v)$. Write $M_f=\prod_{v\in V(D)}x^{d^+_D(v)}_v$. What we have just determined is that the monomials in the expansion of $f$ that are in correspondence with the spanning Eulerian subdigraphs of $D$ are precisely the occurrences of $\pm M_f$.  Since letting $y_v=x_v$ does not change the sign of any monomial in the expansion, Observations 2 and 3 guarantee that the occurrences of $+M_f$ are in correspondence with even spanning Eulerian subdigraphs, and the occurrences of $-M_f$ are in correspondence with odd spanning Eulerian subdigraphs.

Hence, after collecting like terms, the coefficient of $M_f$ in $f$ is equal to $EE(D)-EO(D)$. Therefore, if $EE(D)\neq EO(D)$, this coefficient is nonzero, and since $M_f$ has maximum total degree in $f$, by the Combinatorial Nullstellensatz, the theorem holds true.

\end{proof}

\section{Constructing $\mathcal{W}(D)$}

In this section, we introduce a new construction that associates to each digraph $D$ another digraph $\mathcal{W}(D)$. This construction is integral to our main result, \Cref{thm:newat}, and indeed, it was devised specifically for the purpose of finding such a theorem. We first give a formal definition of $\mathcal{W}(D)$, followed by an explanation and some examples. We then prove some useful propositions and theorems involving $\mathcal{W}(D)$.\vspace{.2cm}

\begin{defi}
\label{def:WD}
Given a digraph $D=(V,\vec{E})$, we construct a new digraph, $\mathcal{W}(D)$, as follows: 

\begin{enumerate}

\item For each $vw\in \vec{E}(D)$, construct a digraph $H_{vw}$ whose vertex set is given by  $$\{v^{vw}\}\cup \{x^{vw}:x\in N_D(v)\setminus N_D(w) \}$$ 
and whose edge set is  \begin{align*}
\{v^{vw}x^{vw}:x\in N_D(v)\setminus N_D(w) \}.\\
\end{align*}

\item Define the vertex set of $\mathcal{W}(D)$ to be the (disjoint) union $$\left(\bigcup_{vw\in \vec{E}(D)}V(H_{vw})\right)\cup \{x^*:x\in V(D)\},$$

and define the edge set of $\mathcal{W}(D)$ to be \begin{align*} 
&\bigcup_{vw\in \vec{E}(D)}\vec{E}(H_{vw})\\
\cup &\{v^*v^{vw}:vw\in \vec{E}(D)\}\\
\cup &\{x^{vw}x^*:x\in N_D(v)\setminus N_D(w)\}\\ 
\cup &\{v^{vw}x^*: x\in N_D(w)\setminus N_D[v]\}.\\
\end{align*}
\end{enumerate}

\end{defi}

Rephrasing \Cref{def:WD}, we get the following process for constructing $\mathcal{W}(D)$: \\

 Step 1: For each $vw\in \vec{E}(D)$, construct a (star) digraph $H_{vw}$ so that $v^{vw}$ is a source vertex, and if $x\in N_D(v)\setminus N_D(w)$, there is an edge directed from $v^{vw}$ to $x^{vw}$ in $H_{vw}$. 
 
Step 2: Union all of the digraphs $H_{vw}$ from Step 1, along with vertices $v^*$ for each $v\in V(D)$. Orient an edge outward from $v^*$ to each vertex of the form $v^{vw}$ that was a source vertex in step 1.
 
 Step 3: For each vertex $x^{vw}$ from Step 1 with $x\neq v$, orient an edge away from $x^{vw}$ and towards $x^*$. 
 
 Step 4: For each vertex of the form $v^{vw}$ in Step 1, and each $x^*\in N_D(w)\setminus N_D[v]$ include the edge $v^{vw}x^*$.

\begin{defi}For each $vw\in \vec{E}(D)$, we call the copy of $H_{vw}$ that lives inside of $\mathcal{W}(D)$ the \textbf{$\mathbf{vw}$-star} of $\mathcal{W}(D)$.

\end{defi}
\newpage
\begin{exam}
\label{exam:WD}
Some examples of the $\mathcal{W}(D)$ construction.
\begin{center}
\begin{tikzpicture}
\node (D_1) at (0,-2){$D_1$};
\node (nothing) at (3,1.8){$\longrightarrow$};
    \begin{scope}[every node/.style={circle,thick,draw}]
    \node (1) at (-1.5,0) {1};
    \node (2) at (0,1.5) {2};
    \node (3) at (1.5,0) {3};
    \node (4) at (0,3) {4};
   \end{scope}

\begin{scope}[>={Stealth[black]},
              every node/.style={fill=white,circle},
              every edge/.style={draw=black, very thick}]
   \path [->] (2) edge  (4);
   \path [->] (1) edge  (3);
   \path [->] (1) edge  (2);
      \path [->] (3) edge  (2);
\end{scope}
\end{tikzpicture}\scalebox{.65}{
\begin{tikzpicture}
\begin{scope}[every node/.style={circle,thick,draw}]

     \node (1) at (-1.5,0) {$1^*$};
    \node (2) at (0,1.5) {$2^*$};
    \node (3) at (1.5,0) {$3^*$};
    \node (4) at (0,3.5) {$4^*$};

    \node (112) at (-5,3.5) {$1^{12}$};
    \node (212) at (-2.5,2.5) {$2^{12}$};

   \node (113) at (-1.5,-1.5) {$1^{13}$};
    \node (313) at (1.5,-1.5) {$3^{13}$};
    
    \node (332) at (5,3.5) {$3^{32}$};
    \node (232) at (2.5,2.5) {$2^{32}$};
    
    
     \node (224) at (0,7.5) {$2^{24}$};
      \node (424) at (0,5.5) {$4^{24}$};
    \node (124) at (-1.4,6) {$1^{24}$};
    \node (324) at (1.4,6) {$3^{24}$};
    
\end{scope}

\begin{scope}[>={Stealth[black]},
              every node/.style={fill=white,circle},
              every edge/.style={draw=black, very thick}]
   \path [->] (112) edge  (212);
   \path [->] (112) edge  (4);
\end{scope}

\begin{scope}[>={Stealth[black]},
              every node/.style={fill=white,circle},
              every edge/.style={draw=black, very thick}]
   \path [->] (224) edge  (424);
   \path [->] (224) edge  (124);
   \path [->] (224) edge  (324);
\end{scope}

\begin{scope}[>={Stealth[black]},
              every node/.style={fill=white,circle},
              every edge/.style={draw=black, very thick}]
   \path [->] (113) edge  (313);
  
\end{scope}

   \begin{scope}[>={Stealth[black]},
              every node/.style={fill=white,circle},
              every edge/.style={draw=black, very thick}]
   \path [->] (332) edge  (232);
   \path [->] (332) edge  (4);
\end{scope}

\begin{scope}[>={Stealth[black]},
              every node/.style={fill=white,circle},
              every edge/.style={draw=black, very thick}]
\path [->] (1) edge  (112);
\path [->] (3) edge  (332);
\path [->] (1) edge  (113);
\path [->] (2) edge[bend right =30] (224);
\end{scope}

 \begin{scope}[>={Stealth[black]},
              every node/.style={fill=white,circle},
              every edge/.style={draw=black, very thick}]
\path [->] (212) edge  (2);
\path [->] (232) edge  (2);
\path [->] (313) edge  (3);
\path [->] (124) edge  (1);
\path [->] (324) edge  (3);
\path [->] (424) edge  (4);
\end{scope}
\node[scale=1.42] (WD) at (0, -2.6){$\mathcal{W}(D_1)$};
\end{tikzpicture}}

\vspace{.1cm}
\vspace{.1cm}
\begin{tikzpicture}
\node (D_2) at (0,-4){$D_2$};
\node (nothing) at (3,0){$\longrightarrow$};
    \begin{scope}[every node/.style={circle,thick,draw}]
    \node (1) at (-1.5,0) {1};
    \node (2) at (0,1.5) {2};
    \node (3) at (1.5,0) {3};
    \node (4) at (0,-1.5) {4};
   \end{scope}

\begin{scope}[>={Stealth[black]},
              every node/.style={fill=white,circle},
              every edge/.style={draw=black, very thick}]
   \path [->] (2) edge  (1);
   \path [->] (4) edge  (1);
   \path [->] (1) edge  (3);
   \path [->] (3) edge  (4);
    \path [->] (3) edge  (2);
    
\end{scope}
\end{tikzpicture}\scalebox{.65}{
\begin{tikzpicture}
\node (nothing) at (5, 0){};
\begin{scope}[every node/.style={circle,thick,draw}]

    \node (1) at (-1.5,0) {$1^*$};
    \node (2) at (0,1.5) {$2^*$};
    \node (3) at (1.7,0) {$3^*$};
    \node (4) at (0,-1.5) {$4^*$};

    \node (113) at (-3.5,0) {$1^{13}$};
    \node (313) at (-3.5,1.5) {$3^{13}$};
    
    \node (332) at (3,4.5) {$3^{32}$} ;
     \node (232) at (1.5,3) {$2^{32}$};
      \node (432) at (1.5,1.5) {$4^{32}$} ;
    
    \node (334) at (3,-4.5) {$3^{34}$} ;
    \node (234) at (1.5,-1.5) {$2^{34}$} ;
    \node (434) at (1.5,-3) {$4^{34}$} ;
   
    \node (221) at (-2.25,4.5) {$2^{21}$} ;
     \node (121) at (-1.875,2) { $1^{21}$};

    \node (441) at (-2.25,-4.5) {$4^{41}$};
    \node (141) at (-1.875,-2) {$1^{41}$};

\end{scope}

\begin{scope}[>={Stealth[black]},
              every node/.style={fill=white,circle},
              every edge/.style={draw=black, very thick}]
   
    \path [->] (221) edge  (121);
\end{scope}

\begin{scope}[>={Stealth[black]},
              every node/.style={fill=white,circle},
              every edge/.style={draw=black, very thick}]
   \path [->] (113) edge  (313);
    
\end{scope}


\begin{scope}[>={Stealth[black]},
              every node/.style={fill=white,circle},
              every edge/.style={draw=black, very thick}]
   \path [->] (441) edge  (141);

\end{scope}

\begin{scope}[>={Stealth[black]},
              every node/.style={fill=white,circle},
              every edge/.style={draw=black, very thick}]
   \path [->] (334) edge  (434);
   \path [->] (334) edge  (234);

\end{scope}
 \begin{scope}[>={Stealth[black]},
              every node/.style={fill=white,circle},
              every edge/.style={draw=black, very thick}]
   \path [->] (332) edge  (432);
   \path [->] (332) edge  (232);

\end{scope}
\begin{scope}[>={Stealth[black]},
              every node/.style={fill=white,circle},
              every edge/.style={draw=black,very thick}]
   \path [->] (1) edge  (113);
    \path [->] (4) edge  (441);
    \path [->] (3) edge  (334);
    \path [->] (3) edge  (332);
    \path [->] (2) edge  (221); 
\end{scope}

\begin{scope}[>={Stealth[black]},
              every node/.style={fill=white,circle},
              every edge/.style={draw=black,very thick}]
   \path [->] (434) edge  (4);
    \path [->] (234) edge (2);
    \path [->] (441) edge  (2);
    \path [->] (313) edge  (3); 
    \path [->] (121) edge  (1);
    \path [->] (221) edge  (4);
    \path [->] (232) edge  (2);
    \path [->] (432) edge  (4);
     \path [->] (141) edge  (1);
\end{scope}
\node[scale=1.42] (LD) at (0, -5){${\mathcal{W}(D_2)}$};
\end{tikzpicture}}
\vspace{.1cm}
\vspace{.1cm}
\begin{tikzpicture} 
\node (D_3) at (0,-4){$D_3$};
\node (nothing) at (3,0){$\longrightarrow$};
    \begin{scope}[every node/.style={circle,thick,draw}]
    \node (1) at (-1.5,0) {1};
    \node (2) at (0,1.5) {2};
    \node (3) at (1.5,0) {3};
    \node (4) at (0,-1.5) {4};
   \end{scope}

\begin{scope}[>={Stealth[black]},
              every node/.style={fill=white,circle},
              every edge/.style={draw=black, very thick}]
   \path [->] (2) edge  (1);
   \path [->] (3) edge  (2);
   \path [->] (3) edge  (4);
    \path [->] (4) edge  (1);
\end{scope}
\end{tikzpicture}
\scalebox{.65}{
\begin{tikzpicture}
\begin{scope}[every node/.style={circle,thick,draw}]

    \node (1) at (-1.5,0) {$1^*$};
    \node (2) at (0,1.5) {$2^*$};
    \node (3) at (1.5,0) {$3^*$};
    \node (4) at (0,-1.5) {$4^*$};

    \node (332) at (3.5,4.5) {$3^{32}$} ;
     \node (232) at (1.5,4.2) {$2^{32}$};
 \node (432) at (4,2.5) {$4^{32}$} ;
 
    \node (334) at (3.5,-4.5) {$3^{34}$} ;
    \node (434) at (1.5,-4.2) {$4^{34}$};
    \node (234) at (4,-2.5) {$2^{34}$} ;

    \node (221) at (-3,4.5) {$2^{21}$} ;
    \node (121) at (-3.5,2.5) { $1^{21}$};
    \node (321) at (-1,4.2) { $3^{21}$};

    \node (441) at (-3,-4.5) {$4^{41}$} ;
    \node (141) at (-3.5,-2.5) { $1^{41}$};
    \node (341) at (-1,-4.2) { $3^{41}$};

\end{scope}

\begin{scope}[>={Stealth[black]},
              every node/.style={fill=white,circle},
              every edge/.style={draw=black, very thick}]
    
    \path [->] (221) edge  (121);
     \path [->] (221) edge[bend left =10] (321);
\end{scope}


\begin{scope}[>={Stealth[black]},
              every node/.style={fill=white,circle},
              every edge/.style={draw=black, very thick}]
   \path [->] (441)  edge  (141);
   \path [->] (441) edge[bend right =10]  (341);
    
\end{scope}

 \begin{scope}[>={Stealth[black]},
              every node/.style={fill=white,circle},
              every edge/.style={draw=black, very thick}]
   \path [->] (334) edge  (234);
   \path [->] (334) edge  (434);
\end{scope}

 \begin{scope}[>={Stealth[black]},
              every node/.style={fill=white,circle},
              every edge/.style={draw=black, very thick}]
   \path [->] (332) edge  (432);
   \path [->] (332) edge  (232);
\end{scope}
\begin{scope}[>={Stealth[black]},
              every node/.style={fill=white,circle},
              every edge/.style={draw=black,very thick}]
    \path [->] (4) edge  (441);
    \path [->] (3) edge  (334);
    \path [->] (3) edge  (332);
    \path [->] (2) edge  (221); 
\end{scope}

\begin{scope}[>={Stealth[black]},
              every node/.style={fill=white,circle},
              every edge/.style={draw=black,very thick}]
   \path [->] (434) edge  (4);
    \path [->] (234) edge[bend right =20] (2);
    \path [->] (441) edge[bend right =10](2);
    \path [->] (121) edge  (1);
    \path [->] (221) edge[bend left =10]   (4);
    \path [->] (232) edge  (2);
    \path [->] (432) edge[bend left =30] (4);
    \path [->] (141) edge  (1);
    \path [->] (321) edge[bend left =20]  (3);
    \path [->] (341) edge[bend right =20]  (3);
    \path [->] (332) edge[bend left=20] (1);
    \path [->] (334) edge[bend right =20]  (1);
\end{scope}
\node[scale=1.42] (LD) at (0, -5.2){$\mathcal{W}(D_3)$};
\end{tikzpicture}}

\end{center}

\end{exam}

The short directed paths defined next will be an important tool throughout the remainder of this work, as it turns out that the Eulerian subdigraphs of $\mathcal{W}(D)$ can always be ``pieced together" using these paths. 

\begin{defi}
\label{def:gammapaths}Let $vw\in \vec{E}(D)$. For each $x\in N_D[v]\triangle N_D(w)$, define the directed path $_{v^*}P^{vw}_{x^*}$ in $\mathcal{W}(D)$ as follows: 

\[
  {}_{v^*}P^{vw}_{x^*} =
  \begin{cases}
            v^*\rightarrow v^{vw}\rightarrow x^{vw}\rightarrow x^* & \quad \text{ if } x\in N_D[v]\setminus N_D(w) \\
            v^*\rightarrow v^{vw}\rightarrow x^*& \quad \text{ if } x\in N_D(w)\setminus N_D[v] \\
  \end{cases}.
\]

That is, ${}_{v^*}P^{vw}_{x^*}$ is the unique directed path that starts at $v^*$, travels through the $vw$-star (and no other stars), and ends at $x^*$. We call these paths \textbf{$\pmb{\gamma}$-paths}. 
\end{defi}

Notice that whenever $x\in N_D[v]\setminus N_D(w)$, the path ${}_{v^*}P^{vw}_{x^*}$ has length $3$, and whenever $x\in N_D(w)\setminus N_D[v]$, the path ${}_{v^*}P^{vw}_{x^*}$ has length $2$. The parity of the length of the $\gamma$-paths will be important for the results later on.  

\begin{prop}
\label{prop:disjointgammapaths}
Two $\gamma$-paths ${}_{u*}P^{uv}_{w^*}$ and ${}_{x*}P^{xy}_{z^*}$ are edge disjoint if and only if $uv\neq xy$. 
\end{prop}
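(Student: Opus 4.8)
The plan is to read off the structure of the $\gamma$-paths directly from the partition of $\vec{E}(\mathcal{W}(D))$ that is built into \Cref{def:WD}. Every edge of $\mathcal{W}(D)$ belongs to exactly one of three families: the \emph{inward} edges $v^*v^{vw}$ with $vw\in\vec{E}(D)$; the \emph{sector} edges, i.e.\ those lying in some $\vec{E}(H_{vw})$; and the \emph{outward} edges $x^{vw}x^*$ with $x\neq v$. The first thing I would establish is that these three families are pairwise disjoint as sets of directed edges: an inward edge has a vertex $v^*$ as its tail, an outward edge has a vertex $x^*$ as its head (and a sector vertex as its tail), and a sector edge has both endpoints among the $V(H_{vw})$, a set that contains no vertex of the form $x^*$. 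I would also record two facts that are immediate from the ``disjoint union'' language in \Cref{def:WD}: the sets $\vec{E}(H_{vw})$ are pairwise disjoint for distinct $vw\in\vec{E}(D)$, and a named vertex $a^{vw}$ determines its sector, so $a^{vw}=b^{xy}$ forces $vw=xy$.

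Next I would locate each $\gamma$-path inside this partition using \Cref{def:gammapaths}. A $\gamma$-path ${}_{v^*}P^{vw}_{x^*}$ uses precisely one inward edge, namely $v^*v^{vw}$; then one sector edge ($v^{vw}x^{vw}$) if $x\in N_D[v]\setminus N_D(w)$, or two sector edges ($v^{vw}y^{vw}_x$ and $y^{vw}_x x^{vw}$) if $x\in N_D(w)\setminus N_D[v]$, all of which lie in $\vec{E}(H_{vw})$; and precisely one outward edge, namely $x^{vw}x^*$. This bookkeeping is the substance of the argument.

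For the implication $uv\neq xy \Rightarrow$ edge disjoint, suppose some edge is common to ${}_{u^*}P^{uv}_{w^*}$ and ${}_{x^*}P^{xy}_{z^*}$. Since the three families partition $\vec{E}(\mathcal{W}(D))$, that edge lies in a single family, so it is a common inward edge, a common sector edge, or a common outward edge of the two paths. If it is inward then $u^*u^{uv}=x^*x^{xy}$, so $u^{uv}=x^{xy}$ and hence $uv=xy$; if it is a sector edge it lies in $\vec{E}(H_{uv})\cap\vec{E}(H_{xy})$, which is empty unless $uv=xy$; if it is outward then $w^{uv}w^*=z^{xy}z^*$, so $w^{uv}=z^{xy}$ and hence $uv=xy$. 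In every case $uv=xy$, contrary to hypothesis, so no common edge exists. For the converse, if $uv=xy$ then $u=x$ and $v=y$, and both paths begin with the inward edge $u^*u^{uv}$, so they are not edge disjoint; this is the contrapositive of ``edge disjoint $\Rightarrow uv\neq xy$''.

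I do not expect a genuine obstacle here: the argument is pure bookkeeping against \Cref{def:WD} and \Cref{def:gammapaths}. The only points that need care are verifying that the three edge families really are pairwise disjoint and that vertices of $\mathcal{W}(D)$ carry their sector as a label — and both follow at once from the disjoint-union conventions in \Cref{def:WD}.
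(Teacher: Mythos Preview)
Your proof is correct and takes essentially the same approach as the paper: both directions rest on the vertex-disjointness of the sectors $H_{vw}$ and the observation that each $\gamma$-path is anchored to a single sector, with the shared inward edge $u^*u^{uv}$ handling the converse. The paper's version is a bit more compact, replacing your three-family case analysis with the single remark that every edge of ${}_{u^*}P^{uv}_{w^*}$ has at least one endpoint in the $uv$-sector, which immediately rules out any common edge when $uv\neq xy$.
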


\begin{proof}
Suppose $uv=xy$.  By definition, both ${}_{u^*}P^{uv}_{w^*}$ and ${}_{x*}P^{xy}_{z^*}$ contain the edge $u^*u^{uv}$, so the two paths are not edge-disjoint. \

Suppose $uv\neq xy$. By construction of $\mathcal{W}(D)$, the $uv$-star and the $xy$-star are vertex disjoint. By definition of ${}_{u^*}P^{uv}_{w^*}$, every edge of ${}_{u^*}P^{uv}_{w^*}$ has an endpoint in the $uv$-star, and no edge of ${}_{u^*}P^{uv}_{w^*}$ has an endpoint in the $xy$-star. Similarly, every edge of ${}_{x*}P^{xy}_{z^*}$ has an endpoint in the $xy$-star and no edge of ${}_{x*}P^{xy}_{z^*}$ has an endpoint in the $uv$-star. Thus, the two paths have no common edges; they are edge-disjoint.
\end{proof}

\begin{thm}
\label{thm:eulerianspanning}
 Let $S$ be a spanning subdigraph of $\mathcal{W}(D)$. Then $S$ is Eulerian if and only if the following properties hold:
 
 \begin{enumerate}
     \item  The edge set of $S$ is a (possibly empty) union of edge disjoint $\gamma$-paths.\vspace{.2cm}
     
     \item For each vertex of the form $v^* $ in $ V(D)$, $d^+_S(v^*)=d^-_S(v^*)$.
 \end{enumerate}\vspace{.2cm}

\end{thm}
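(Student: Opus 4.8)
The plan is to exploit the very rigid local structure of $\mathcal{W}(D)$. Before treating either implication, I would record three facts that follow directly from \Cref{def:WD}: (i) each vertex $v^{vw}$ has a unique in-edge in $\mathcal{W}(D)$, namely $v^*v^{vw}$; (ii) each vertex $y^{vw}_x$ has in-degree and out-degree exactly $1$ in $\mathcal{W}(D)$; and (iii) each vertex $x^{vw}$ with $x\neq v$ has a unique out-edge, $x^{vw}x^*$, and a unique in-edge (either $v^{vw}x^{vw}$ or $y^{vw}_x x^{vw}$). I would also note that the edges of $\mathcal{W}(D)$ split into ``extended sectors'', $\widehat{H}_{vw} := \vec{E}(H_{vw})\cup\{v^*v^{vw}\}\cup\{x^{vw}x^* : x\in N_D[v]\triangle N_D(w)\}$, and that these edge sets partition $\vec{E}(\mathcal{W}(D))$ because the sectors are pairwise vertex disjoint and each $\gamma$-path lies in $\widehat{H}_{vw}$ for a single sector $vw$.

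For the backward direction, assume (1) and (2). Since any two $\gamma$-paths through the same sector share the edge $v^*v^{vw}$, \Cref{prop:disjointgammapaths} forces an edge-disjoint family of $\gamma$-paths to use at most one path per sector. A $\gamma$-path through the $vw$-sector adds in-degree $1$ and out-degree $1$ to each of its internal vertices (that is, $v^{vw}$, $x^{vw}$, and $y^{vw}_x$ if present), and since each of these vertices belongs to no other sector, its in- and out-degrees in $S$ are both $0$ or both $1$; so it is balanced. The vertices $v^*$ are balanced by (2), and every remaining vertex has in-degree and out-degree $0$ in $S$. Hence $S$ is Eulerian.

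For the forward direction, assume $S$ is Eulerian; property (2) is immediate, so I focus on (1) and argue one sector at a time. Fix $vw\in\vec{E}(D)$. By fact (i) and the balance of $S$ at $v^{vw}$, we have $d^+_S(v^{vw})=d^-_S(v^{vw})\in\{0,1\}$. If this common value is $0$, then $v^{vw}$ sends no edge into $S$; using facts (ii) and (iii) and repeated appeals to balance, none of the vertices $y^{vw}_x$ or $x^{vw}$ does either, and $v^*v^{vw}\notin S$, so $S\cap\widehat{H}_{vw}=\emptyset$. If the common value is $1$, then $v^*v^{vw}\in S$ and a unique edge leaves $v^{vw}$ in $S$; following that edge forward, each successive vertex has in-degree $1$ in $\mathcal{W}(D)$ and so is forced by balance to place its unique out-edge in $S$, and the walk terminates at a vertex $x^*$. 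Thus $S\cap\widehat{H}_{vw}$ is exactly one $\gamma$-path through the $vw$-sector. Taking the union over all sectors and using the partition above, $\vec{E}(S)$ is a union of $\gamma$-paths lying in distinct sectors, which are pairwise edge disjoint by \Cref{prop:disjointgammapaths}; this gives (1).

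The only step with any real content is the propagation argument in the forward direction, and even there the difficulty is organizational: one must run the case split $d^-_S(v^{vw})\in\{0,1\}$ and check that, once the status of the edge $v^*v^{vw}$ is determined, each of the three types of sector vertex has a completely forced behavior. I do not anticipate any genuine obstacle.
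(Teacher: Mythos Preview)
Your proof is correct and rests on the same structural facts as the paper's (the unique in-edge at $v^{vw}$, the degree-one constraints at $y^{vw}_x$ and $x^{vw}$, and \Cref{prop:disjointgammapaths}), but it is organized differently. For the forward direction the paper starts at a vertex $v^*$ with positive out-degree, follows the outgoing edge into a sector, and uses the in-degree-$\le 1$ property to trace a $\gamma$-path out to some $x^*$; it then iterates until a closed walk is formed, obtaining a cycle decomposition whose pieces are $\gamma$-paths, and argues edge-disjointness separately by contradiction (two non-disjoint $\gamma$-paths would force $d^+_S(u^{uv})>1\ge d^-_S(u^{uv})$). You instead partition $\vec{E}(\mathcal{W}(D))$ into extended sectors $\widehat{H}_{vw}$ and show directly that $S\cap\widehat{H}_{vw}$ is either empty or a single $\gamma$-path, so edge-disjointness comes for free from the partition. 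Your local, sector-by-sector argument is a little cleaner and avoids the auxiliary cycle decomposition; the paper's path-following is closer in spirit to how the $\gamma$-paths are later used in \Cref{thm:big}.

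One small point worth making explicit in your case $d^-_S(v^{vw})=1$: you show that the edges along the traced walk lie in $S$, but the claim ``$S\cap\widehat{H}_{vw}$ is \emph{exactly} one $\gamma$-path'' also needs the observation that the branches of $H_{vw}$ not used by this walk carry no $S$-edges. This follows by running your case-$0$ propagation on those branches (each unused $x'^{vw}$ or $y^{vw}_{x'}$ has in-degree $0$ in $S$ since the unique out-edge of $v^{vw}$ in $S$ went elsewhere, hence out-degree $0$ by balance), and you clearly have this in mind; it just deserves one sentence.
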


\begin{proof}
Suppose that $S$ is Eulerian. Then for each vertex of the form $v^*$ in $S$ we must have $d^+_S(v^*)=d^-_S(v^*)$, by definition of Eulerian. Furthermore, if $v^*$ is such that $d^+(v^*)>0$, then since every outgoing edge from $v^*$ enters some $vw$-star, and every vertex inside  a $vw$-star has in-degree at most one, $S$ must contain a directed path from $v^*$, into the $vw$-star, and back out of the $vw$-star. Since the only edges that exit a $vw$-star exit to a vertex of the form $x^*$ with $x^*\neq v^*$, it follows that this path is a path of the form ${}_{v^*}P^{vw}_{x^*}$. Since the occurence of such a path in $S$ increases the in-degree of $x^*$ in $S$, $S$ must contain another $\gamma$-path that leaves $x^*$, travels into some $xy$-star, and exits the $xy$-star to some $z^*$, with $z^*\neq x^*$. We must continue this pattern until we have cycled back to $v^*$, so that $d^+_S(v^*)=d^-_S(v^*)$. Repeating this process gives a cycle decomposition of $S$, where each cycle is further decomposed into $\gamma$-paths. 

Now, for the sake of contradiction, suppose there are two distinct $\gamma$-paths,   ${}_{u*}P^{uv}_{w^*}$ and ${}_{x*}P^{xy}_{z^*}$ in this decomposition that are not edge-disjoint. Then by \Cref{prop:disjointgammapaths}, $uv=xy$.  Since the two paths are distinct, they contain distinct outgoing edges from $u^{uv}
$, so we have $d^+_S(u^{uv})>1$. But by construction, $d^-_{\mathcal{W}(D)}(u^{uv})=1$, so $d^-_S(u^{uv})\leq 1$, and this contradicts that $S$ is Eulerian. Hence, it must be the case that the edge set of $S$ is a union of edge-disjoint $\gamma$-paths. \vspace{.2cm}

To prove the converse, suppose that $\vec{E}(S)$ can be written as a union $\bigcup_{i=1}^n\vec{E}\left( {}_{v^*_i}P^{v_iw_i}_{x^*_i}\right)$, where the paths in the union are edge disjoint. Assume that for vertices of the form $v^*_i$ or $x^*_i$ in the above union, $d^+_S(v^*_i)=d^-_S(v^*_i)$ and $d^+_S(x^*_i)=d^-_S(x^*_i)$. \Cref{prop:disjointgammapaths} implies that edge-disjoint $\gamma$-paths can only meet at their endpoints, and their endpoints are the vertices of the form $v^*_i$ or $x^*_i$, so we need only verify that if $u $ is an internal vertex of ${}_{v^*_i}P^{v_iw_i}_{x^*_i}$ for some $i$, then $d^+_S(u)=d^-_S(u)$. Since each ${}_{v^*_i}P^{v_iw_i}_{x^*_i}$ is a directed path and the $\gamma$-paths are edge-disjoint, we have $d^+_S(u)=d^-_S(u)=1$. Thus, $S$ is Eulerian, as claimed.
\end{proof}

\section{An Alon-Tarsi Style Theorem for Additive List Colorings}

In this section, we prove our main result, \Cref{thm:newat}. The proofs in this section follow the same outline as the proof of \Cref{thm:at}, but with $\gamma$-paths playing the role of edges.\vspace{.2cm}

Let $G$ be a graph and let $D$ be an orientation of $G$. Identify the vertices of $G$ with the set $\{1,2,..,.n\}$. Note that if there is some $x\in \mathbb{N}^{|V(G)|}$  so that the polynomial   $$f(x_1,..,x_n)=\prod_{vw\in \vec{E}(D)}\left(\sum_{u \in N_D(w)} x_u - \sum_{u\in N_D(v)}x_u\right)$$ is nonzero at $x$, then $x$ corresponds to an additive coloring of $G$.\vspace{.2cm}

Note also that after cancellations within each factor, the polynomial $f$ can be written as 
$$f(x_1,..,x_n)=\prod_{vw\in \vec{E}(D)}\left(\sum_{u \in N_D(w)\setminus N_D(v)} x_u - \sum_{u\in N_D(v)\setminus N_D(w)}x_u\right).$$

We relate this polynomial to $\mathcal{W}(D)$ in the next theorem.
\begin{thm}
\label{thm:big}
Let $D=(V,\vec{E})$ be a digraph, and identify the vertices of $V$ with the set $\{1,2,...,n\}$.  For each $v\in V$, let $d_v=d^+_D(v)$. For each $v\in V$ define a variable $x_v$, and define $f\in \mathbb{R}[x_1,..,x_n]$ by $$f(x_1,...,x_n)=\prod_{vw\in \vec{E}}\left(\sum_{u \in N_D(w)\setminus N_D(v)} x_u - \sum_{u\in N_D(v)\setminus N_D(w)}x_u\right).$$

  Let $M_f=\prod_{v\in V(D)}x_v^{d_v}$. Then there is a one-to-one correspondence between the spanning Eulerian subdigraphs of $\mathcal{W}(D)$ and the occurrences of $\pm {M_f}$ in the expansion of $f$, such that each occurrence of $-M_f$ in the expansion corresponds to an odd Eulerian subdigraph of $\mathcal{W}(D)$, and each occurrence of $+M_f$ corresponds to an even Eulerian subdigraph of $\mathcal{W}(D)$. 
\end{thm}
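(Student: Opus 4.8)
<br>

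The plan is to follow the proof of \Cref{thm:at} verbatim in structure, with $\gamma$-paths in the role of edges and the digraph $\mathcal{W}(D)$ in the role of $D$. The first step is to introduce a disambiguating polynomial $\hat{f}$. In each factor of $f$ indexed by $vw\in\vec{E}(D)$, the variable $x_v$ of the \emph{tail} vertex occurs exactly once, namely as the positive term $x_v$ coming from $v\in N_D(w)\setminus N_D(v)$; I replace that single occurrence with a fresh variable $y_v$, so that
\[
\hat{f}(x_1,\dots,x_n,y_1,\dots,y_n)=\prod_{vw\in\vec{E}(D)}\left(y_v+\sum_{u\in N_D(w)\setminus N_D[v]}x_u-\sum_{u\in N_D(v)\setminus N_D(w)}x_u\right),
\]
and substituting $y_v=x_v$ for all $v$ recovers $f$. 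A monomial $\hat{M}$ in the uncollected expansion of $\hat{f}$ is a choice of one term from each factor, and I attach to it the spanning subdigraph $S_{\hat{M}}$ of $\mathcal{W}(D)$ which is the union of the $\gamma$-paths prescribed by the dictionary: the term $y_v$ in the $vw$-factor corresponds to taking no edges of the $vw$-sector; the term $-x_u$ (for $u\in N_D(v)\setminus N_D(w)=N_D[v]\setminus N_D(w)$) corresponds to the length-$3$ path ${}_{v^*}P^{vw}_{u^*}$; and the term $x_u$ (for $u\in N_D(w)\setminus N_D[v]$) corresponds to the length-$4$ path ${}_{v^*}P^{vw}_{u^*}$. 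Using the decomposition $N_D[v]\triangle N_D(w)=(N_D(v)\setminus N_D(w))\cupdot(N_D(w)\setminus N_D[v])$, the terms of the $vw$-factor are in natural bijection with $\{\varnothing\}\cup\{\gamma\text{-paths through the }vw\text{-sector}\}$, and since \Cref{prop:disjointgammapaths} makes $\gamma$-paths in distinct sectors edge disjoint, this is the analogue of Observation $1$: $\hat{M}\mapsto S_{\hat{M}}$ is a bijection from the uncollected monomials of $\hat{f}$ onto the spanning subdigraphs of $\mathcal{W}(D)$ whose edge set is a union of pairwise edge-disjoint $\gamma$-paths, at most one per sector.

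Next I would prove the two statements analogous to Observations $2$ and $3$. For the sign: a term of a factor is negative exactly when it corresponds to a length-$3$ $\gamma$-path, so $\operatorname{sign}(\hat{M})=(-1)^{a}$, where $a$ is the number of length-$3$ $\gamma$-paths in $S_{\hat{M}}$; as $S_{\hat{M}}$ is an edge-disjoint union of $a$ paths of length $3$ and some number $b$ of paths of length $4$, we get $|\vec{E}(S_{\hat{M}})|=3a+4b\equiv a\pmod 2$, hence $\operatorname{sign}(\hat{M})=(-1)^{|\vec{E}(S_{\hat{M}})|}$. For the degrees, I would count variable occurrences: every $\gamma$-path through a $vw$-sector uses the edge $v^*v^{vw}$, and these edges are exactly the out-edges of $v^*$ in $\mathcal{W}(D)$, while the last edge of any $\gamma$-path ending at $v^*$ is one of the in-edges of $v^*$. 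From this one reads off $\deg_{\hat{M}}(x_v)=d^-_{S_{\hat{M}}}(v^*)$ (occurrences of $x_v$ correspond to chosen $\gamma$-paths terminating at $v^*$) and $\deg_{\hat{M}}(y_v)=d_v-d^+_{S_{\hat{M}}}(v^*)$ (occurrences of $y_v$ correspond to out-edges $vw$ of $v$ whose sector receives no $\gamma$-path). Therefore $\deg_{\hat{M}}(x_v)+\deg_{\hat{M}}(y_v)=d_v$ holds for every $v$ if and only if $d^+_{S_{\hat{M}}}(v^*)=d^-_{S_{\hat{M}}}(v^*)$ for every $v$; since property $(1)$ of \Cref{thm:eulerianspanning} holds automatically for every $S_{\hat{M}}$, this says precisely that $S_{\hat{M}}$ is a spanning Eulerian subdigraph of $\mathcal{W}(D)$.

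The last step is to set $y_v=x_v$ everywhere. This turns $\hat{f}$ into $f$, turns each uncollected monomial $\hat{M}$ into a monomial $M$ of $f$ with $\deg_M(x_v)=\deg_{\hat{M}}(x_v)+\deg_{\hat{M}}(y_v)$, and changes neither $S_{\hat{M}}=S_M$ nor $\operatorname{sign}(\hat{M})=\operatorname{sign}(M)$. Hence $M$ is an occurrence of $\pm M_f$ exactly when $S_M$ is spanning Eulerian, with $+M_f$ occurring precisely when $|\vec{E}(S_M)|$ is even and $-M_f$ precisely when it is odd; combined with the bijection of the first step, this is the assertion of the theorem. The step I expect to be the main obstacle is the degree bookkeeping: correctly accounting for all occurrences of $x_v$ across all factors — $x_v$ can appear only as a negative term (a length-$3$ path ending at $v^*$) or as a positive term (a length-$4$ path ending at $v^*$), and never inside a factor in which $v$ is the tail — and matching the total with $d^-_{S_{\hat{M}}}(v^*)$, together with matching the $y_v$ count with $d_v-d^+_{S_{\hat{M}}}(v^*)$, taking care throughout that the ``$x\neq v$'' condition built into the edge set of $\mathcal{W}(D)$ lines up with the single special term $x_v$/variable $y_v$ of each $vw$-factor.
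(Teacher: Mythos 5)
Your proposal is correct and follows essentially the same route as the paper's own proof: the same disambiguating polynomial $\hat{f}$, the same dictionary between chosen terms and $\gamma$-paths (with $y_v$ meaning an empty $vw$-sector), the same parity argument via the lengths $3$ and $4$, and the same degree bookkeeping identifying $\deg(x_v)$ with $d^-(v^*)$ and $\deg(y_v)$ with $d_v-d^+(v^*)$, finishing via \Cref{thm:eulerianspanning} and the substitution $y_v=x_v$. No gaps; the step you flagged as the likely obstacle is handled exactly as in the paper.
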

 
\begin{proof}
 For the sake of the proof, we will need to distinguish between the occurrences of a variable $x_v$ inside of a $vw$ factor of $f$ and the occurences $x_v$ inside of other factors. To do this, we temporarily replace the occurences of $x_v$ within a $vw$ factor with a new variable $y_v$; for each $v\in V(D)$, define two variables, $x_v$ and $y_v$, and define the polynomial $\hat{f}\in \mathbb{R}[x_1,...,x_n,y_1,...,y_n]$ by  $$\hat{f}(x_1,...,x_n,y_1,...,y_n)=\prod_{vw\in \vec{E}}\left(y_v+\sum_{u \in N_D(w)\setminus N_D[v]} x_u - \sum_{u\in N_D(v)\setminus N_D(w)}x_u\right).$$

Consider the monomials in the expansion of $\hat{f}$. (Here we are considering the monomials before ``collecting like terms," so there will be $\prod_{vw\in \vec{E}}|N_D(v)\triangle N_D(w) |$ such monomials.) Each monomial corresponds to picking one (signed) variable from each factor of $\hat{f}$. Therefore, if $M$ is such a monomial, we can write $M=\prod_{vw\in \vec{E}(D)}z_{vw}$ where each $z_{vw}$ is a (signed) variable that occurs in the $vw$ factor of $\hat{f}$. We see from the definition of $\hat{f}$, that for each $vw\in \vec{E}(D)$, either $z_{vw}=y_v$, or $z_{vw}=\pm x_u$ for some $u\neq v$. \vspace{.2cm}

Let $M$ be a monomial in the expansion of $\hat{f}$. Write $M=\prod_{vw\in \vec{E}(D)}z_{vw}$, as above. Associate to $M$ the subdigraph $S_M$ of $\mathcal{W}(D)$ defined by: $$V(S_M)=V(\mathcal{W}(D))$$
$$\vec{E}(S_M)=\bigcup\{\vec{E}({}_{v^*}P^{vw}_{u*}):z_{vw}=\pm x_u, u\neq v\}.$$
\vspace{.2cm}

 By \Cref{prop:disjointgammapaths}, the subdigraphs $S_M$ are precisely the spanning subdigraphs of $\mathcal{W}(D)$ whose edge set is a union of edge disjoint $\gamma$-paths. Furthermore, for distinct monomials $M$ and $M'$, $S_M\neq S_{M'}$. Thus, we have the following observation:\vspace{.2cm}

\noindent \underline{Observation 1:}  The correspondence $M\leftrightarrow S_M$ is a bijective correspondence between the monomials in the expansion of $\hat{f}$ and the spanning subdigraphs of $\mathcal{W}(D)$ whose edge set is a union of edge disjoint $\gamma$-paths.\vspace{.2cm}

 Let $M$ be a monomial in the expansion of $\hat{f}$ and write $M=\prod_{vw\in\vec{E}(D)}z_{vw}$, as above. By the definition of $\hat{f}$, whenever $z_{vw}=+x_u$, we have $u\in N_D(w)\setminus N_D[v]$, so \Cref{def:gammapaths} tells us that $_{v^*}P^{vw}_{u*}$ is a path of length two and therefore has an even number of edges. If $z_{vw}=-x_u$, then $u\in N_D(v)\setminus N_D(w)$, so $_{v^*}P^{vw}_{u^*}$ is a path of length three, and therefore has an odd number of edges. Set $$t^-=\#\{vw\in \vec{E}(D):z_{vw}=-x_u \text{ for some } u\}.$$ Then we see that $S_M$ has an odd number of edges if and only if $t^-$ is odd, which is true if and only if the sign of $M$ is negative in the expansion of $\hat{f}$. We record this as observation 2:\vspace{.2cm}

\noindent \underline{Observation 2}: $S_M$ has an odd number of edges if and only if the sign of $M$ is negative in the expansion of $\hat{f}$.

\vspace{.2cm}

Fix  $u\in V(D)$.  Let $M$ be a monomial in the expansion of $\hat{f}$ and write $M=\prod_{vw\in \vec{E}(D)}z_{vw}$, as above. Define the following notation: 
\begin{center}
\begin{align*}
k_u&=\#\{uw\in \vec{E}(D):z_{uw}\neq y_u\}\\
t_u&=\#\{vw\in\vec{E}(D):z_{vw}=\pm x_u\}\\
\end{align*}
\end{center}

 By the construction of $S_M$, there are exactly $k_u$ $\gamma$-paths in $S_M$ whose initial vertex is $u^*$, and exactly $t_u$ $\gamma$-paths whose terminal vertex is $u^*$. Hence, $d^+_{S_M}(u^*)=k_u,$ and $d^-_{S_M}(u^*)=t_u$, so we have that $d^+_{S_M}(u^*)=d^-_{S_M}(u^*)$ if and only if $t_u=k_u$. Furthermore, note that $t_u=\text{deg}_M(x_u)$,   and since $y_u$ occurs in exactly $d_u$ factors of $\hat{f}$,  $\text{deg}_M(y_u)=d_u-k_u$.   Therefore, $$d^+_{S_M}(u^*)=d^-_{S_M}(u^*) \text{ if and only if }\text{deg}_M(y_u)=d_u-t_u=d_u-\text{deg}_M(x_u).$$ Thus, by \Cref{prop:disjointgammapaths} and Observation 1, we have the following:    \vspace{.2cm}

\noindent \underline{Observation 3:} $S$ is a spanning Eulerian subdigraph of $\mathcal{W}(D)$ if and only if $S=S_M$ for some $M$ with $\text{deg}_M(y_v)+\text{deg}_M(x_v)=d_v$ for every $v\in V(D)$.  \vspace{.2cm}

If we now let $y_v=x_v$ in $\hat{f}$ for each $v$, we get back the polynomial $f$, and the monomials that are in correspondence with the spanning Eulerian subdigraphs of $\mathcal{W}(D)$ are now exactly the monomials $M$ in the expansion of $f$ such that for each $v\in V(D)$, $\text{deg}_M(x_v)=d_v$; that is, the monomials in correspondence with the spanning Eulerian subdigraphs of $\mathcal{W}(D)$ are precisely the occurences of $\pm M_f$ in the expansion of $f$.  Since letting $y_v=x_v$ does not change the sign of any monomial in the expansion, observations 2 and 3 guarantee that the occurrences of $+M_f$ are in correspondence with even spanning Eulerian subdigraphs, and the occurences of $-M_f$ are in correspondence with odd spanning Eulerian subdigraphs, as desired.  
\end{proof}

\begin{thm}
\label{thm:newat}
Let $G$ be a graph and let $D$ be an orientation of $G$. For each $v\in V(G)$, let $L(v)$ be a set of $d^+_D(v)+1$ positive integers. If $EE(\mathcal{W}(D))\neq EO(\mathcal{W}(D))$, then there is an additive coloring $\ell:V\rightarrow \mathbb{N}$ of $G$ such that $\ell(v)\in L(v)$ for each $v\in V$.  
\end{thm}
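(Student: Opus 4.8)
The plan is to combine \Cref{thm:big} with the Combinatorial Nullstellensatz in exactly the same way that \Cref{thm:at} is deduced from its associated monomial count. First I would recall the polynomial
\[
f(x_1,\dots,x_n)=\prod_{vw\in\vec{E}(D)}\left(\sum_{u\in N_D(w)\setminus N_D(v)}x_u-\sum_{u\in N_D(v)\setminus N_D(w)}x_u\right),
\]
and observe that any point $x\in\mathbb{N}^{|V(G)|}$ at which $f$ does not vanish gives an additive coloring of $G$: indeed $f(x)\neq 0$ forces every factor to be nonzero, i.e.\ $\sum_{u\in N_D(w)}x_u\neq\sum_{u\in N_D(v)}x_u$ for every arc $vw$, which since $D$ orients $G$ means $c(v)\neq c(w)$ for every edge $vw\in E(G)$, where $c(v)=\sum_{u\in N_G(v)}x_u$. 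So $\ell(v)=x_v$ is the desired additive coloring.

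Next I would check that the hypotheses of the Combinatorial Nullstellensatz are met for the monomial $M_f=\prod_{v\in V}x_v^{d_v}$ with $d_v=d^+_D(v)$. Each factor of $f$ is homogeneous of degree $1$, and there are $|\vec{E}(D)|=\sum_v d_v$ factors, so the total degree of $f$ is $\sum_v d_v=\deg M_f$; thus $M_f$ is a maximal-degree monomial. By \Cref{thm:big}, the occurrences of $+M_f$ in the (uncollected) expansion of $f$ are in bijection with even spanning Eulerian subdigraphs of $\mathcal{W}(D)$, and the occurrences of $-M_f$ with odd ones, so after collecting like terms the coefficient of $M_f$ in $f$ equals $EE(\mathcal{W}(D))-EO(\mathcal{W}(D))$. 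The hypothesis $EE(\mathcal{W}(D))\neq EO(\mathcal{W}(D))$ makes this coefficient nonzero.

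Finally, I would apply the Combinatorial Nullstellensatz with $F=\mathbb{R}$ (or $\mathbb{Q}$), the exponents $d_1,\dots,d_n$, and the sets $L_i=L(i)$, which have the required size $d_i+1$ and consist of positive integers (hence distinct elements of $F$). Since the coefficient of $\prod_i x_i^{d_i}$ is nonzero, there exists $t\in L(1)\times\cdots\times L(n)$ with $f(t)\neq0$; by the first paragraph, setting $\ell(v)=t_v$ gives an additive coloring of $G$ with $\ell(v)\in L(v)$ for every $v$, completing the proof.

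I do not expect any real obstacle here: the theorem is essentially a corollary, since all the combinatorial work has been pushed into \Cref{thm:big} (which itself rests on \Cref{thm:eulerianspanning} and \Cref{prop:disjointgammapaths}). The only points requiring a line of care are (i) the bookkeeping that $f(x)\neq0$ genuinely encodes a proper coloring of the \emph{underlying undirected} graph — this uses that $D$ is an orientation of $G$, so that each unordered edge contributes exactly one arc and hence one inequality — and (ii) confirming that the degree computation makes $M_f$ a top-degree monomial, which is what licenses the use of the Nullstellensatz; both are routine.
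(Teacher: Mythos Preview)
Your proposal is correct and follows essentially the same route as the paper: apply \Cref{thm:big} to identify the coefficient of $M_f=\prod_v x_v^{d^+_D(v)}$ in $f$ as $EE(\mathcal{W}(D))-EO(\mathcal{W}(D))$, note that $M_f$ has maximal total degree, and invoke the Combinatorial Nullstellensatz to produce $x\in\prod_v L(v)$ with $f(x)\neq 0$, which encodes an additive coloring. Your write-up is in fact a bit more explicit than the paper's (spelling out why $f(x)\neq 0$ yields a proper induced coloring and why $\deg M_f=\sum_v d_v$ equals the total degree of $f$), but the argument is the same.
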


\begin{proof}
Write $V(D)=\{1,2,...,n\}.$ By \Cref{thm:big}, the coefficient of  $M_f=\prod_{v\in V(D)}x^{d_D^+(v)}$ in $$f(x_1,...,x_n)=\prod_{vw\in \vec{E}}\left(\sum_{u \in N_D(w)} x_u - \sum_{u\in N_D(v)}x_u\right)$$ is equal to $EE(\mathcal{W}(D))-EO(\mathcal{W}(D))$. Hence, if $EE(\mathcal{W}(D))\neq EO(\mathcal{W}(D))$, then the coefficient of $M_f$ in $f$ is nonzero. Since $M_f$ has maximum degree in $f$, there is some $x\in \prod_{i=1}^{n}L(i)$ for which $f(x)$ is nonzero, by the combinatorial nullstellensatz. This corresponds to an additive coloring of $G$. 
\end{proof}
In \cite{ref4}, it was demonstrated that if $D$ is an orientation of $G$, then there is a bijection between the orientations of $G$  that have the same out degree sequence as $D$ and the spanning Eulerian subdigraphs of $D$. Applying this idea to $\mathcal{W}(D)$, we see that if the number of orientations of the underlying undirected graph of $\mathcal{W}(D)$ with the same out-degree sequence as $\mathcal{W}(D)$ is odd, it cannot be the case that $EE(\mathcal{W}(D))=EO(\mathcal{W}(D))$. This yields the following corollary.

\begin{cor}
Let $G$ be a graph and $D$ an orientation of $G$. For each $v\in V(G)$, let $L(v)$ be a list of $d^+_D(v)+1$, positive integers. Let $H$ be the underlying undirected graph of $\mathcal{W}(D)$.  If there are an odd number of orientations of $H$ with the same out-degree sequence as $\mathcal{W}(D)$, then $G$ can be additively colored by assigning to each $v\in V(G)$ some element of $L(v)$. 
\end{cor}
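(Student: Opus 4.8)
The plan is to reduce the statement to \Cref{thm:newat} by showing that the parity hypothesis on orientations of $H$ forces $EE(\mathcal{W}(D)) \neq EO(\mathcal{W}(D))$. The key fact I would invoke is the bijection, established in [4], between the set of orientations of an undirected graph that share a fixed out-degree sequence and the set of spanning Eulerian subdigraphs of any one reference orientation realizing that sequence. Concretely, if $\mathcal{W}(D)$ is the reference orientation of $H$, then each orientation $O$ of $H$ with the same out-degree sequence corresponds to the spanning Eulerian subdigraph $S_O$ of $\mathcal{W}(D)$ consisting precisely of the edges whose direction in $O$ differs from their direction in $\mathcal{W}(D)$ (reversing an Eulerian set of edges preserves all out-degrees, and conversely any out-degree-preserving re-orientation reverses an Eulerian set).

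The next step is to track parity through this bijection. Under the correspondence $O \leftrightarrow S_O$, the number of edges of $S_O$ is the number of edges whose orientation was flipped. So the orientations $O$ split into those with $|E(S_O)|$ even and those with $|E(S_O)|$ odd, and the two classes have sizes $EE(\mathcal{W}(D))$ and $EO(\mathcal{W}(D))$ respectively. Hence the total number of orientations of $H$ with the out-degree sequence of $\mathcal{W}(D)$ equals $EE(\mathcal{W}(D)) + EO(\mathcal{W}(D))$. If this sum is odd, then $EE(\mathcal{W}(D))$ and $EO(\mathcal{W}(D))$ cannot be equal (if they were equal the sum would be even), so $EE(\mathcal{W}(D)) \neq EO(\mathcal{W}(D))$.

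Finally, with $EE(\mathcal{W}(D)) \neq EO(\mathcal{W}(D))$ in hand, \Cref{thm:newat} applies directly: for any assignment of lists $L(v)$ of $d^+_D(v)+1$ positive integers, $G$ admits an additive coloring choosing from the lists, which is exactly the conclusion. I do not expect any serious obstacle here; the only point requiring a touch of care is stating the orientation–Eulerian-subdigraph bijection for $H$ correctly and confirming it respects the edge-count parity, and this is routine once one fixes $\mathcal{W}(D)$ itself as the base orientation. The argument is short: invoke [4], translate the parity count, deduce $EE \neq EO$, then quote \Cref{thm:newat}.
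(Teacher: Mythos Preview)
Your proposal is correct and matches the paper's approach exactly: the paper also invokes the bijection from [4] between orientations of $H$ with the same out-degree sequence as $\mathcal{W}(D)$ and spanning Eulerian subdigraphs of $\mathcal{W}(D)$, observes that an odd total forces $EE(\mathcal{W}(D))\neq EO(\mathcal{W}(D))$, and then applies \Cref{thm:newat}.
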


\section{An Application}

We now give an application of \Cref{thm:newat}.  The proof of \Cref{thm:simplicial sink} demonstrates a relationship between directed cycles in $\mathcal{W}(D)$ and certain closed walks in $G$ of the same parity. It also demonstrates that in some cases, we can find specific orientations $D$ of $G$ that obstruct cycles (in this case, odd cycles) in $\mathcal{W}(D)$. Note that a vertex $v$ of $G$ is said to be \textit{simplicial} if the neighbors of $v$ in $G$ form a clique. 

\begin{thm}
\label{thm:simplicial sink}
Let $G=(V,E)$ and let $D$ be an orientation of $G$. For each $v\in V$, let $L(v)$ be a list of $d^+_D(v)+1$ positive integers. If for every odd (undirected) cycle $C$ of $G$, there exists some $u\in V(C)$ such that $u$ is simplicial in $G$ and $d^+_D(u)=0$, then there is an additive coloring of $G$ that assigns to each $v\in V$ an element of $L(v)$.
\end{thm}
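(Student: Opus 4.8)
The plan is to apply \Cref{thm:newat}: it suffices to prove $EE(\mathcal{W}(D))\neq EO(\mathcal{W}(D))$, and I will get this from the stronger statement that $EO(\mathcal{W}(D))=0$, i.e.\ that every spanning Eulerian subdigraph of $\mathcal{W}(D)$ has an even number of edges. Since the empty subdigraph is Eulerian, this gives $EE(\mathcal{W}(D))\geq 1>0=EO(\mathcal{W}(D))$. Now let $S$ be a spanning Eulerian subdigraph of $\mathcal{W}(D)$. By \Cref{thm:eulerianspanning} its edge set is a union of edge-disjoint $\gamma$-paths, and the argument in that proof in fact decomposes $S$ into edge-disjoint directed cycles, each of which is a cyclic concatenation of $\gamma$-paths; since $|\vec E(S)|$ is the sum of the lengths of these cycles, it is enough to show that \emph{every directed cycle in $\mathcal{W}(D)$ has even length}.

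So fix a directed cycle $\mathcal{C}$ in $\mathcal{W}(D)$. Reading off its $*$-vertices in cyclic order, $\mathcal{C}$ is a concatenation ${}_{v_1^*}P^{v_1w_1}_{v_2^*}\ {}_{v_2^*}P^{v_2w_2}_{v_3^*}\cdots {}_{v_k^*}P^{v_kw_k}_{v_1^*}$ of $\gamma$-paths (indices mod $k$), with the $v_i^*$ distinct and each $v_iw_i\in\vec E(D)$. I will project $\mathcal{C}$ to a closed walk $W$ in $G$: for each $i$, if ${}_{v_i^*}P^{v_iw_i}_{v_{i+1}^*}$ has length $3$ — equivalently $v_{i+1}\in N_D[v_i]\setminus N_D(w_i)$, and then $v_{i+1}\in N_D(v_i)$ since endpoints of a $\gamma$-path are distinct — replace it by the single edge $v_iv_{i+1}$ of $G$; if it has length $4$ — equivalently $v_{i+1}\in N_D(w_i)\setminus N_D[v_i]$ — replace it by the length-$2$ walk $v_i\,w_i\,v_{i+1}$, whose edges $\{v_i,w_i\}$ and $\{w_i,v_{i+1}\}$ indeed lie in $G$. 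Then $W$ is a closed walk in $G$ on vertex set $\{v_1,\dots,v_k\}\cup\{w_i:{}_{v_i^*}P^{v_iw_i}_{v_{i+1}^*}\text{ has length }4\}$, and the crucial bookkeeping is that $|\mathcal{C}|$ and $|W|$ have the same parity: writing $a$ and $b$ for the numbers of length-$3$ and length-$4$ $\gamma$-paths among the ${}_{v_i^*}P^{v_iw_i}_{v_{i+1}^*}$, we have $|\mathcal{C}|=3a+4b$ and $|W|=a+2b$, which differ by an even number.

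Now suppose, for contradiction, that $\mathcal{C}$ has odd length. Then $W$ is an odd closed walk in $G$, so it contains an odd cycle $C$ with $V(C)\subseteq V(W)$. I claim $V(W)$ contains no vertex that is simultaneously simplicial in $G$ and a sink of $D$, which contradicts the hypothesis applied to $C$. First, each $v_i$ is the initial vertex of ${}_{v_i^*}P^{v_iw_i}_{v_{i+1}^*}$, which forces $v_iw_i\in\vec E(D)$, so $d^+_D(v_i)\geq 1$ and $v_i$ is not a sink. Second, if some $w_j$ arising from a length-$4$ $\gamma$-path were simplicial, then from $v_jw_j\in\vec E(D)$ and $v_{j+1}\in N_D(w_j)$ we get $v_j,v_{j+1}\in N_G(w_j)$, so the clique condition would force $v_j\sim v_{j+1}$ in $G$; but length $4$ means $v_{j+1}\in N_D(w_j)\setminus N_D[v_j]$, hence $v_{j+1}\not\sim v_j$ — a contradiction. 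So every vertex of $C$ is either a non-sink or a non-simplicial vertex of $G$, contradicting the hypothesis. Therefore $\mathcal{C}$ has even length; hence $EO(\mathcal{W}(D))=0<EE(\mathcal{W}(D))$, and \Cref{thm:newat} completes the proof.

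The main obstacle I expect is the careful construction of the projection $\mathcal{C}\mapsto W$ together with the parity bookkeeping, and the clean reduction from spanning Eulerian subdigraphs to single directed cycles via \Cref{thm:eulerianspanning} (including the observation, implicit in its proof, that a directed cycle of $\mathcal{W}(D)$ visiting its $*$-vertices cyclically decomposes uniquely into $\gamma$-paths of length $3$ or $4$). Once the projection and the description of $V(W)$ are pinned down, the contradiction with the hypothesis — via the standard fact that an odd closed walk contains an odd cycle on a subset of its vertices — is immediate.
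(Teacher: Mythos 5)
Your proposal is correct and follows essentially the same route as the paper: reduce to showing $EO(\mathcal{W}(D))=0$, extract an odd directed cycle decomposed into $\gamma$-paths, project it to an odd closed walk in $G$ with matching parity, and contradict the existence of a simplicial sink on the resulting odd cycle. The only cosmetic difference is that you phrase the key step as ``every directed cycle of $\mathcal{W}(D)$ is even'' and rule out simplicial sinks on all of $V(W)$ at once, whereas the paper works with the one guaranteed simplicial sink; the substance is identical.
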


\begin{proof}
We claim that in this case, $EO(\mathcal{W}(D))=0$. For the sake of contradiction, suppose that $D$ is an orientation that satisfies the hypotheses of the theorem, and that $\mathcal{W}(D)$ has some spanning Eulerian subdigraph with an odd number of edges. Then $\mathcal{W}(D)$ contains an odd directed cycle, $C_{\mathcal{W}}$. Since $C_{\mathcal{W}}$ is Eulerian, $C_{\mathcal{W}}$ can be decomposed into a sequence of $\gamma$-paths, by \Cref{prop:disjointgammapaths} (here we are thinking of the $\gamma$-paths as edge paths):  $$C_{\mathcal{W}}={}_{v^*_1}P^{v_1w_1}_{v^*_2} {}_{v^*_2}P^{v_2w_2}_{v^*_3} \cdots {}_{v^*_{j-1}}P^{v_{j-1}w_{j-1}}_{v^*_{j}}{}_{v^*_j}P^{v_jw_j}_{v^*_1}.$$

From $C_{\mathcal{W}}$, we can define a closed walk in $G$. For each $1\leq i \leq j$, first define short walks from $v_i$ to $v_{i+1(\mod{j})}$ in $G$ for each $i$:\vspace{.2cm}

\[ {}_{v_i}Q^{v_{i}w_{i}}_{v_{i+1}}
  =
  \begin{cases}
           v_i\rightarrow v_{i+1} \hspace{1.36cm}\text{ if } \hspace{.5cm} {}_{v^*_i}P^{v_iw_i}_{v^*_{i+1} }= v^*_{i}\rightarrow v^{v_iw_i}_i\rightarrow v^{v_i w_i}_{i+1}\rightarrow v^*_{i+1} \\
             v_i\rightarrow w_{i}\rightarrow v_{i+1} \quad\text{ if } \hspace{.5cm}{}_{v^*_i}P^{v_iw_i}_{v^*_{i+1}}= v^*_{i}\rightarrow v^{v_iw_i}_i\rightarrow v^*_{i+1}  \\
  \end{cases}
\]

 \vspace{.2cm}

Notice that ${}_{v_i}Q^{v_{i}w_{i}}_{v_{i+1}}$ has the same parity as ${}_{v^*_i}P^{v_iw_i}_{v^*_{i+1} }$.\vspace{.2cm}

Now define the walk $Q$ in $G$ by
$$Q= {}_{v_1}Q^{v_1w_1}_{v_2} 
{}_{v_2}Q^{v_2w_2}_{v_3} \cdots {}_{v_j}Q^{v_jw_j}_{v_1}.$$

Evidently, $Q$ is a closed walk in $G$. Furthermore, $Q$ has odd length since the parity of ${}_{v_i}Q^{v_{i}w_{i}}_{v_{i+1}}$ is the same as that of ${v^*_i}P^{v_iw_i}_{v^*_{i+1}}$, for each $i$, and $C_{\mathcal{W}}$ has odd length. Since $Q$ has odd length, it
must contain an odd cycle, C. By assumption, there is some $u\in V(C)$ that is simplicial in $G$ and has $d^+_D(u)=0$. Since $d^+_D(u)=0,$ we also have $d^+_{\mathcal{W}(D)}(u^*)=0$, so $u^*$ does not lie on the directed cycle $C_{\mathcal{W}}$. Thus, from the construction of $Q$, we see that there must be some $ i\in \{1,2,..,j\}$ for which  $u=w_i$, $u\neq v_{i}$ and $u\neq v_{i+1}$; that is, the directed path  $$_{v^*_i}P^{v_iu}_{v^*_{i+1}}=v^*_i\rightarrow v^{v_i u}_i\rightarrow v^*_{i+1}$$ occurs in the decomposition of $C_{\mathcal{W}}$.  By definition of $_{v^*_i}P^{v_iu}_{v^*_{i+1}}$, we must have that $v_i,v_{i+1}\in N_G(u)$ and $v_{i+1}\in N_G(u)\setminus N_G[v_i]$. So $v_{i+1}$ cannot be a neighbor of $v_i$, contradicting the assumption that $u$ is simplicial. \vspace{.2cm}

Hence, it must be the case that $EO(\mathcal{W}(D))=0$. Since the empty Eulerian subdigraph of $\mathcal{W}(D)$ is even, we have $EE(\mathcal{W}(D))\geq 1$, and the claim follows from \Cref{thm:newat}. 
 \vspace{.2cm}

\end{proof}

\Cref{thm:simplicial sink} may, at first exposure, seem stronger than it actually is. Indeed, if a graph $G$ contains $K_4$ as a subgraph, there is no orientation of $G$ for which every odd cycle contains a simplicial vertex with out-degree $0$. Furthermore, suppose that $D$ is an orientation of $G$ satisfying the hypothesis of \Cref{thm:simplicial sink}. We can get a 3-coloring of $G$ in the following way: If $S\subseteq V(G)$ is a smallest set of simplicial vertices in $G$ such that for all $x\in S$, $d^+_D(x)=0$, and such that every odd cycle in $G$ contains some $x\in S$, then since $S$ is an independent set, we can color all the vertices in $S$ the same color.  Then since $G-S$ has no remaining odd cycles, it is bipartite, and we can color $G-S$ with two new colors to achieve a $3$-coloring of $G$.

Note also that if $G$ is a tripartite graph that admits an orientation $D$ such that one of the color classes of $G$ contains only simplicial vertices with out-degree $0$, then since every odd cycle must travel through a vertex in this color class, we see that $D$ satisfies the hypothesis of \Cref{thm:simplicial sink}. In light of these observations, the content of \Cref{thm:simplicial sink} can be rephrased as follows. 

\begin{thm}
\label{thm:tripartite}
If $G$ is a tripartite graph that admits an orientation $D$ such that one of the color classes of $G$ contains only simplicial vertices with out-degree $0$, and $L(v)$ is a list of positive integers of length $d^+_D(v)+1$ for each $v\in V(G)$, then $G$ can be additively colored by assigning to each $v\in V(G)$ an element of $L(v)$. 
\end{thm}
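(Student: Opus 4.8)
The plan is to deduce \Cref{thm:tripartite} directly from \Cref{thm:simplicial sink}, since the statement of \Cref{thm:tripartite} is essentially a specialization of the hypothesis of \Cref{thm:simplicial sink} to the tripartite setting; the discussion paragraph immediately preceding the statement already sketches why. So the proof is short: I would let $G$ be tripartite with color classes $A$, $B$, $C$, and suppose $D$ is an orientation such that, say, every vertex in $A$ is simplicial in $G$ and has $d^+_D(v) = 0$. I then want to verify the hypothesis of \Cref{thm:simplicial sink}, namely that every odd cycle of $G$ contains a simplicial vertex of out-degree $0$.

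The key step is the observation that in a tripartite graph with parts $A$, $B$, $C$, every odd closed walk — in particular every odd cycle — must use at least one vertex of each color class, hence at least one vertex of $A$. Indeed, a cycle avoiding $A$ lies entirely in $B \cup C$, which induces a bipartite subgraph, and bipartite graphs contain no odd cycle. Therefore any odd cycle $C$ of $G$ has some $u \in V(C) \cap A$, and by hypothesis this $u$ is simplicial with $d^+_D(u) = 0$. This is exactly the hypothesis needed to invoke \Cref{thm:simplicial sink}, which then yields the desired additive list coloring of $G$ from the lists $L(v)$ of size $d^+_D(v)+1$.

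I expect no real obstacle here — the content is entirely in \Cref{thm:simplicial sink} and in the elementary fact about odd cycles in multipartite graphs. The only thing to be careful about is the phrase ``one of the color classes'': the argument should name the class in question (the one consisting of simplicial out-degree-$0$ vertices) and note that the other two classes together induce a bipartite graph, so that ``every odd cycle meets this class'' is immediate. I would write this up as a two or three sentence proof that fixes the distinguished color class, observes the bipartiteness of the union of the other two classes, concludes that every odd cycle hits the distinguished class, and applies \Cref{thm:simplicial sink}.

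\begin{proof}
Let $A$, $B$, $C$ be the color classes of $G$, and suppose without loss of generality that $D$ is an orientation of $G$ such that every vertex of $A$ is simplicial in $G$ and satisfies $d^+_D(v) = 0$. The subgraph of $G$ induced by $B \cup C$ is bipartite, hence contains no odd cycle. Therefore every odd (undirected) cycle $C'$ of $G$ must contain a vertex $u \in V(C') \cap A$, and by hypothesis $u$ is simplicial in $G$ with $d^+_D(u) = 0$. Thus $D$ satisfies the hypothesis of \Cref{thm:simplicial sink}, and applying that theorem to $G$, $D$, and the lists $L(v)$ gives an additive coloring of $G$ assigning to each $v \in V(G)$ an element of $L(v)$.
\end{proof}
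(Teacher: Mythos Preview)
Your proof is correct and matches the paper's approach exactly: the paper does not give a separate proof of \Cref{thm:tripartite}, but derives it in the preceding paragraph by the same observation that every odd cycle must meet the distinguished color class (since the other two classes induce a bipartite subgraph), and then invokes \Cref{thm:simplicial sink}.
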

Note that \cref{thm:tripartite} implies that $\eta_{\ell}(G)\leq \Delta (G)+1$ for tripartite graphs such that one of the color classes of $G$ contains only simplicial vertices. Note also that the following result, originally proved in \cite{ref8}, is an immediate consequence of \Cref{thm:simplicial sink}.   
\begin{cor}
\label{cor:bipartite} Let $G$ be a bipartite graph and $D$ an orientation on $G$. If $L(v)$ is a list of $d^+_D(v)+1$ positive integers for each $v\in V(G)$, then there is an additive coloring of $G$ that assigns to each $v$ an element of $L(v)$.
\end{cor}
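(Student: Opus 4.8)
The plan is to deduce this directly from \Cref{thm:simplicial sink} by observing that its hypothesis is vacuously satisfied. Since $G$ is bipartite, $G$ has no odd cycles at all, so the statement ``for every odd cycle $C$ of $G$, there exists $u\in V(C)$ that is simplicial in $G$ with $d^+_D(u)=0$'' holds trivially (there is nothing to check). Hence \Cref{thm:simplicial sink} applies to $G$ with the given orientation $D$ and lists $L(v)$, and produces the desired additive coloring. That is the entire argument; there is no real obstacle, since the corollary is an immediate specialization.

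If one prefers a more self-contained justification that does not merely invoke vacuity, I would instead unwind the relevant portion of the proof of \Cref{thm:simplicial sink}: suppose for contradiction that $EO(\mathcal{W}(D))\neq 0$, so $\mathcal{W}(D)$ contains an odd directed cycle $C_{\mathcal W}$. By \Cref{thm:eulerianspanning} (or \Cref{prop:disjointgammapaths}), $C_{\mathcal W}$ decomposes into $\gamma$-paths ${}_{v_i^*}P^{v_iw_i}_{v_{i+1}^*}$, and the associated closed walk $Q = {}_{v_1}Q^{v_1w_1}_{v_2}\cdots {}_{v_j}Q^{v_jw_j}_{v_1}$ in $G$ has the same (odd) parity as $C_{\mathcal W}$. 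An odd closed walk in $G$ forces $G$ to contain an odd cycle, contradicting bipartiteness. Therefore $EO(\mathcal{W}(D))=0$, while the empty subdigraph gives $EE(\mathcal{W}(D))\geq 1$, so $EE(\mathcal{W}(D))\neq EO(\mathcal{W}(D))$, and \Cref{thm:newat} yields the coloring.

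Either route is routine; I would present the first (one-line) version as the proof, perhaps with a parenthetical remark noting that the second route recovers the same conclusion without passing through \Cref{thm:simplicial sink}. The only thing to be careful about is stating clearly that the quantifier over odd cycles is vacuous, so that the reader sees immediately why no structural assumption on $D$ beyond ``it is an orientation of $G$'' is needed.
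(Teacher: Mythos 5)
Your proposal is correct and matches the paper's approach: the paper presents this corollary as an immediate consequence of \Cref{thm:simplicial sink}, precisely because a bipartite graph has no odd cycles and the hypothesis is vacuously satisfied. The alternative self-contained route you sketch is also sound, but the one-line vacuity argument is exactly what the paper intends.
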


We give a simple example that shows how \Cref{thm:simplicial sink} might be applied to certain non-bipartite graphs. 

\begin{exam} Let $C_{2k}$ be the cycle of length $2k$ with vertex set  $v_1,..,v_{2k}$. Let $G_{2k}$ be the graph constructed by adding a vertex $u_i$, for each $i\in\{1,...,2k\}$, to $C_{2k}$, and adding the edges $\{u_i,v_i\}$ and $\{u_i,v_{i+1}\}$ for each $i\in \{1,2,..,2k-1\}$, and the edges $u_{2k}v_{2k}$ and $u_{2k}v_1$. Consider the orientation $D$ of $G_{2k}$ where the copy of $C_{2k}$ in $G_{2k}$ is oriented cyclically and for each $i\in \{1,2,...,2k\}$, every edge incident to $u_i$ is oriented towards $u_i$. Then $d^+_D(u_i)=0$ and $d^+_D(v_i)=3$ for each $i\in\{1,2,..,2k\}$. Since every odd cycle of $G$ must contain some $u_i$, and every $u_i$ is simplicial and has out degree $0$, \Cref{thm:simplicial sink} implies that if $L(u_i)$ is a list containing only one positive integer for each $i\in\{1,2,...,2k\}$, and $L(v_i)$ is a list of $4$ positive integers for each $i\in \{1,2,...,2k\}$, then $G$ can be additively colored by assigning to each vertex an element from its respective list. The following graph is $G_6$ with the orientation described.
\begin{center}
\begin{tikzpicture}[scale=.67, >={Stealth[bend]},dot/.style={circle,fill,inner sep=2pt},
    declare function={R=3;},bend angle=12]
 \path[dash pattern=on 1.5pt off 1pt] 
 foreach \X [count=\Y] in {1,2,3,4,5,6}
 {(180-\Y*60:R) node[dot,label={180-\Y*60:$v_{\X}$}] 
 (v\X){}
 \ifnum\Y>1
  \ifnum\Y<7
   (v\the\numexpr\Y-1) edge[solid,->] (v\Y)
  \fi
 \fi
 };
 \path[->] 
(v6) edge (v1);
 \path[dash pattern=on 1.5pt off 1pt]
 foreach \X [count=\Y] in {1,2,3,4,5,6}
 {(150-\Y*60:4.6) node[dot,label={150-\Y*60:$u_{\X}$}] 
 (u\X){}
 \ifnum\Y>0
  \ifnum\Y<7
   (v\the\numexpr\Y) edge[solid,->] (u\Y)
  \fi
 \fi
 };
\path[->]
(v6) edge (u5)
(v5) edge (u4)
(v4) edge (u3)
(v3) edge (u2)
(v2) edge (u1)
(v1) edge (u6)
;
 
\end{tikzpicture}
\end{center}
\end{exam}

\section{Concluding Remarks}

 Note that, for any graph $G$, it is always the case that there exists an orientation $D$ of $G$ such that $EE(D)\neq EO(D)$. In particular, any acyclic orientation $D$ has 
$EE(D)=1$ and $EO(D)=0$. However, it is not necessarily the case that $\mathcal{W}(D)$ is acyclic whenever $D$ is acyclic. For example, the graphs $D_1$ and $D_3$ in \Cref{exam:WD}  are both acyclic, but $EE(\mathcal{W}(D_1))=3$ and $EO(\mathcal{W}(D_1))=1$, and $EE(\mathcal{W}(D_3))=12$ and $EO(\mathcal{W}(D_3))=0$ . (For the curious reader, $EE(\mathcal{W}(D_2))=2$ and $EO(\mathcal{W}(D_2))=8$.) 

Though \Cref{thm:simplicial sink} is an application of \Cref{thm:newat}, it is still an edge case, in the sense that $EO(\mathcal{W}(D))=0$. One can find a great deal of empirical evidence that certain classes of graphs equipped with specific orientations always yield $EE(\mathcal{W}(D))\neq EO(\mathcal{W}(D))$, but proving that this is the case appears to be difficult. 
We pose the following general question: For which graphs $G$ does there exist an orientation $D$ of $G$ so that $EE(\mathcal{W}(D))\neq EO(\mathcal{W}(D))$? We conjecture that such an orientation exists for every graph $G$.

\begin{conj}
\label{conj:conjecture}
For every graph $G$, there exists an orientation $D$ of $G$ such that $EE(\mathcal{W}(D))\neq EO(\mathcal{W}(D))$. 
\end{conj}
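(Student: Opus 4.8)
\textbf{Proof proposal for \Cref{conj:conjecture}.}

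The plan is to attack the conjecture by finding, for each graph $G$, an orientation $D$ for which the parity obstruction is controlled directly, in the spirit of \Cref{thm:simplicial sink}. The guiding principle there was that $EO(\mathcal{W}(D)) = 0$ whenever $D$ prevents odd directed cycles in $\mathcal{W}(D)$; the proof of \Cref{thm:simplicial sink} shows that a directed cycle in $\mathcal{W}(D)$ projects, via the $\gamma$-path decomposition guaranteed by \Cref{thm:eulerianspanning} and \Cref{prop:disjointgammapaths}, to a closed walk $Q$ in $G$ of the same parity, where each $\gamma$-path of length $3$ contributes a single edge step $v_i \to v_{i+1}$ and each $\gamma$-path of length $4$ contributes a two-edge step $v_i \to w_i \to v_{i+1}$. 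So the first step is to record this projection lemma in a general form: from any directed cycle $C_{\mathcal{W}}$ in $\mathcal{W}(D)$ one extracts a closed walk in $G$ of the same parity, using only the combinatorial structure of $\gamma$-paths. The second step is to look for an orientation $D$ making every closed walk that can so arise have even length; equivalently, one wants to rule out odd directed cycles in $\mathcal{W}(D)$ by an assignment of a $\mathbb{Z}/2$-weighting or potential to $V(G)$ that is respected by the two kinds of $\gamma$-path steps.

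More concretely, I would try to prove the stronger statement that for every $G$ there is an orientation $D$ with $EO(\mathcal{W}(D)) = 0$, which suffices since the empty subdigraph is even, so $EE(\mathcal{W}(D)) \geq 1$. A natural candidate is an acyclic orientation $D$ obtained from a fixed linear order $\prec$ on $V(G)$ (orient $v \to w$ iff $v \prec w$). Then in any $\gamma$-path $_{v^*}P^{vw}_{x^*}$ the vertex $v$ has $v \prec w$, and a length-$3$ path has $x \in N[v] \setminus N(w)$ while a length-$4$ path has $x \in N(w) \setminus N[v]$. One would like to choose $\prec$ — perhaps adapted to a proper coloring, a BFS layering, or a degeneracy order — so that every directed cycle of $\mathcal{W}(D)$ is forced to be even, by exhibiting a function $\phi: V(G) \to \mathbb{Z}/2$ (or into $\mathbb{Z}$) such that traversing any $\gamma$-path of length $3$ changes $\phi$ by $1$ and any $\gamma$-path of length $4$ changes $\phi$ by $0$ (or vice versa), which around a directed cycle would force the number of odd $\gamma$-paths, hence the total edge count, to be even. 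The existence of such a $\phi$ is a statement purely about the adjacency structure of $G$ together with $\prec$, and checking whether some $\prec$ admits it is the crux.

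The main obstacle, and the reason the authors only conjecture this, is precisely that no single uniform choice of orientation is known to work: the examples $D_1$ and $D_3$ show that acyclicity of $D$ does not force acyclicity of $\mathcal{W}(D)$, and $D_2$ shows that a given orientation can even make $EO$ strictly larger than $EE$, so one genuinely has to choose $D$ cleverly depending on $G$, not just take any acyclic orientation. I expect the argument to split into cases according to the structure of $G$ (e.g., handling graphs with small clique number or bounded degeneracy first, where a layered order plausibly kills odd $\gamma$-cycles, and then bootstrapping), and the hard part will be controlling the long-range interaction between sectors: a directed cycle in $\mathcal{W}(D)$ can weave through many $vw$-sectors, and the length-$4$ detours through the $y^{vw}_x$ vertices are exactly what can flip parity, so one must show some orientation simultaneously tames all of them. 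If a fully general argument resists, a reasonable fallback is to prove the conjecture for all $G$ with $\Delta(G) \leq 3$, or for all $K_4$-free graphs, by an explicit order-based construction together with the projection lemma above, and to leave the general case open as stated.
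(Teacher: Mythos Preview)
The statement you are attempting is \Cref{conj:conjecture}, which the paper explicitly leaves open: it is posed as a conjecture in the concluding remarks, with no proof given. So there is no ``paper's own proof'' to compare your proposal against, and your write-up is a plan of attack rather than a proof. You yourself acknowledge this in your third paragraph.

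That said, the specific strategy you outline has a concrete obstruction that is worth naming. Your main idea is to find an orientation $D$ with $EO(\mathcal{W}(D))=0$ by exhibiting a potential $\phi:V(G)\to\mathbb{Z}/2$ (or $\mathbb{Z}$) such that length-$3$ $\gamma$-paths change $\phi$ by an odd amount and length-$4$ $\gamma$-paths by an even amount (or vice versa). But for any edge $vw\in\vec{E}(D)$ one always has $w\in N_D(v)\setminus N_D(w)$, so ${}_{v^*}P^{vw}_{w^*}$ is a length-$3$ $\gamma$-path; your constraint then forces $\phi(w)-\phi(v)$ to be odd for \emph{every} edge of $G$, which is exactly a proper $2$-coloring and hence exists only when $G$ is bipartite. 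The ``vice versa'' option is even worse: it forces $\phi$ constant on components, and then any length-$4$ $\gamma$-path gives a contradiction. Thus the potential-function mechanism, as stated, cannot go beyond \Cref{cor:bipartite}. More broadly, aiming for $EO(\mathcal{W}(D))=0$ is strictly stronger than the conjecture and may well be false in general (the paper already notes that the simplicial-sink route to $EO=0$ is blocked once $K_4\subseteq G$); a proof of \Cref{conj:conjecture} will likely need to allow both $EE$ and $EO$ to be positive and argue that they differ, which your plan does not address.
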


 Recall that the additive chromatic number of $G$ is denoted by $\eta(G)$ and the additive list chromatic number by $\eta_\ell(G)$ (see section 1 for definitions). The truth of \Cref{conj:conjecture} would imply that $\eta_l(G)\leq \Delta(G)+1$ for every graph $G$, where $\Delta(G)$ is the maximum degree of vertices of $G$. This bound would be a significant improvement on bounds proved in \cite{ref2},\cite{ref5}, and \cite{ref9}.
  
  Notably, Czerwiński et al. conjectured in \cite{ref8} that $\eta(G)\leq \chi (G)$ for all graphs $G$, where $\chi (G)$ is the classical chromatic number. The conjecture remains open, but some progress has been made--  for example, it was shown in \cite{ref6} that if $G$ is planar, $\eta(G)\leq 468$, and more results on the additive chromatic number of planar graphs can be found in \cite{ref1},\cite{ref7}, and \cite{ref9}. Progress for other types of graphs can be found in \cite{ref10}, where  an up-to-date list of classes of graphs for which the conjecture has been verified is given. Since $\eta(G)\leq \eta_{\ell}(G)$ for every graph $G$, \Cref{thm:newat} offers a possible method for finding upper bounds on $\eta(G)$, and therefore may be a relevant tool for helping to prove Czerwiński et al.'s conjecture for certain classes of graphs. 

\section*{Acknowledgements}

The author would like to thank Karen L. Collins for many insightful comments and suggestions, and Tao Wang for a suggestion that helped simplify the construction in Section 3.

\end{document}